\documentclass{amsart}
\usepackage[utf8]{inputenc}
\usepackage{amsmath,amssymb,fullpage}
\usepackage{halloweenmath}
\usepackage{amsthm}
\usepackage{amsfonts,newlfont,url,xspace}
\usepackage{stmaryrd}
\usepackage[dvipsnames]{xcolor}
\usepackage{graphics,graphicx,verbatim}
\usepackage{float}
\usepackage{enumitem}
\usepackage{hyperref, cleveref}
\usepackage{soul}
\usepackage{tikz}
\usepackage{subcaption}
\usetikzlibrary{arrows.meta} 
\newcommand{\seqnum}[1]{\href{http://oeis.org/#1}{\underline{#1}}}

\usepackage{ifxetex,ifluatex}
\if\ifxetex T\else\ifluatex T\else F\fi\fi T%
  \usepackage{fontspec}
\else
  \usepackage[T1]{fontenc}
  
  \usepackage{lmodern}
\fi

\usepackage{hyperref}

\usepackage{cancel}

\usepackage[colorinlistoftodos]{todonotes}

\newcommand{\Av}{\mathsf{Av}} 
\newcommand{\IT}{\mathrm{LPF}} 

\newcommand{\BSP}{\mathrm{BSP}}
\newcommand{\GBSP}{g\mathrm{BSP}}

\newcommand{\mB}{\mathcal B}

\newcommand{\minB}{\mathrm{min}_\mB}
\newcommand{\maxB}{\mathrm{max}_\mB}

\newcommand{\arms}{\mathrm{Arms}}
\newcommand{\legs}{\mathrm{Legs}}

\newcommand{\PF}{\mathrm{PF}}

\newcommand{\Sym}{\mathfrak{S}}

\newcommand{\Out}{\mathcal{O}}

\newcommand{\out}[1]{\mathcal{O}}

\usepackage{amsrefs}

\theoremstyle{definition} 
\newtheorem{theorem}{Theorem}[section]
\newtheorem{corollary}{Corollary}[section]
\newtheorem{proposition}{Proposition}[section]

\newtheorem{example}{Example}

\newtheorem{lemma}[theorem]{Lemma}

\theoremstyle{definition}
\newtheorem{definition}{Definition}[section]

\theoremstyle{remark}
\newtheorem*{remark}{Remark}

\subjclass{Primary 05A05; Secondary 05A10, 05A15, 05A18, 05A19}
\keywords{Lehmer code, inversion table, parking function, Bell number, Catalan number}

\author[Beerbower]{Melissa Beerbower}
\address[M.~Beerbower]{{Department of Mathematical Sciences, University of Wisconsin-Milwaukee, Milwaukee, WI 53211}}
\email{\textcolor{blue}{\href{mailto: beerbow2@uwm.edu}{beerbow2@uwm.edu}}}

\author[Elder]{Jennifer Elder}
\address[J.~Elder]{Department of Computer Science, Mathematics and Physics, Missouri Western State University, St. Joseph, MO 64507}
\email{\textcolor{blue}{\href{mailto:jelder8@missouriwestern.edu}{jelder8@missouriwestern.edu}}}
 
\author[Harris]{Pamela E. Harris}
\address[P.~E.~Harris]{Department of Mathematical Sciences, University of Wisconsin-Milwaukee, Milwaukee, WI 53211}
\email{\textcolor{blue}{\href{mailto:peharris@uwm.edu}{peharris@uwm.edu}}}
\thanks{This work was supported in part by a Simons Travel Support for Mathematicians award \#00007621.}

\author[Lavene]{Ilana Lavene}
\address[I.~Lavene]{Department of Mathematical Sciences, University of Delaware, Newark, DE 19711}
\email{\textcolor{blue}{\href{mailto:ilanalavene@gmail.com}{ilanalavene@gmail.com}}}

\author[Martinez]{Lucy Martinez}
\address[L.~Martinez]{Department of Mathematics, Rutgers University, Piscataway, NJ 08854}
\email{\textcolor{blue}{\href{mailto:lucy.martinez@rutgers.edu}{lucy.martinez@rutgers.edu}}}
\thanks{L.~Martinez was supported by the NSF Graduate Research Fellowship Program under Grant No. 2233066.}

\author[Martinson]{Adam Martinson}
\address[A.~Martinson]{Department of Mathematics, University of Maryland, College Park, MD 20742}
\email{\textcolor{blue}{\href{mailto:adammartinson123@gmail.com}{adammartinson123@gmail.com}}}

\author[Oldham]{Molly Oldham}
\address[M.~Oldham]{Department of Mathematical Sciences, University of Delaware, Newark, DE 19711}
\email{\textcolor{blue}{\href{mailto:molly.oldham343@gmail.com}{molly.oldham343@gmail.com}}}

\begin{document}
\title{Lehmer Parking Functions and Their Outcomes}

\begin{abstract}
We introduce \emph{Lehmer parking functions} and study their set of parking outcomes. Our main results establish that the number of outcomes of Lehmer parking functions of length $n$ is given by a Bell number, which is exactly the number of set partitions of an $n$ element set. We also show that the number of outcomes of weakly decreasing Lehmer parking functions is given by a Catalan number, which corresponds to a subset of set partitions on a set with $n$ elements referred to as non-intersecting set partitions.
\end{abstract}

\maketitle
\section{Introduction}
The study of patterns and discrete statsitics begins with permutations.
We write permutations in one-line notation $\sigma=\sigma_1\sigma_2\cdots\sigma_n$, and we let $\mathfrak{S}_n$ denote the set of all permutations of $[n]=\{1,2,\ldots,n\}$.
Pattern containment and avoidance is a very active field of study, as is the study of discrete statistics on permutations. For recent work in these areas, we recommend the conference proceedings of Permutation Patterns: the international conference on these subjects \cite{Perm_patterns_conf}. 

In our work, we begin by recalling that an \textit{inversion} of a permutation $\sigma=\sigma_1\sigma_2\cdots\sigma_n$ is an ordered pair of indices $(i,j)$ such that $i<j$ and $\sigma_i>\sigma_j$. 
The number of inversions of a permutation is called the \textit{inversion number} and measures how out of order the permutation is from the identity permutation $12\cdots n$. 
For example, $12345$ has zero inversions because it is completely in ascending order, while $13245$ has one inversion because $2$ and $3$ have switched and are in descending order.
Muir \cite{tmuir} credits Cramer's 1750 work \cite{gcramer} on solving linear equations via determinants, as the first to use the concept of inversions.

Given a permutation, a related concept is its \textit{inversion table}, also known as its \textit{Lehmer code}.
Inversion tables have a rich history in mathematics across a variety of fields. They have been studied in the context of tree permutations \cite{10.1145/100348.100370} and in a simple proof of MacMahon's classical result of the equidistribution of the major index and inversion statistics \cite{mjcollins}. 
Inversion tables are characterized and studied as subexcedent sequences \cite{MR3207475}. They have also been used in studying minimal left coset representatives for Weyl groups of classical type \cite{MR1834087} and in affine permutations of type $A$ \cite{MR1392503}.

Recall that the inversion table of a permutation $\sigma=\sigma_1\sigma_2\cdots\sigma_{n}\in\mathfrak{S}_n$ is defined as the tuple $\alpha=(a_1,a_2,\ldots,a_{n})$, where $a_i$ is the number of entries $j$ in $\sigma$ to the left of $i$ satisfying $j>i$ \cite[p. 30]{StanleyECVol1}.
For example, the permutation $\sigma=524613\in\mathfrak{S}_6$ has inversion table $\beta=(4,1,3,1,0,0)$.
It turns out that inversion tables of permutations of $[n]$ are exactly the $n!$ tuples of length $n$ satisfying $0\le a_i\le n-i$ for all $i\in[n]$;
moreover, the function $\sigma\mapsto\alpha$ from permutations of $[n]$ to their inversion tables is a canonical bijection \cite[Proposition 1.3.12]{StanleyECVol1}.

In this paper, we study $[0,n-1]$-valued inversion tables by adding $1$ to each entry and then consider those tuples as $[n]$-valued parking functions. 
Before giving our formal definition, we recall that a tuple $\alpha=(a_1,a_2,\ldots,a_{n})\in[n]^n$ is a \textit{parking function} if and only if the weakly increasing rearrangement $\alpha^\uparrow=(a_1',a_2',\ldots,a_{n}')$ of $\alpha$ satisfies $a_i'\leq i$ for all $i\in[n]$. We refer to this as the \textit{inequality characterization} of parking functions.
For example, $(2,2,1)$ is a parking function and $(2,2,3)$ is not.
We now introduce our main object of study.
\begin{definition}\label{def:Lehmer parking function}
    A \textit{Lehmer parking function} is a tuple $\alpha=(a_1,a_2,\ldots,a_{n})\in[n]^n$, where $a_i\le n-i+1$ for all $i\in[n]$. 
    We let $\IT_n$ denote the set of Lehmer parking functions of length $n$.
\end{definition}
For example, the inversion table $\beta=(4,1,3,1,0,0)$, after adding one to all the entries, yields the Lehmer parking function $\alpha=(5,2,4,2,1,1)$. 
Our first result establishes that all Lehmer parking functions are, as their name indicates, parking functions.
\begin{lemma}\label{lem:pf}
    For $n\geq 1$ we have $\IT_n\subseteq\PF_n$.
\end{lemma}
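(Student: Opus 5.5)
We verify the inequality characterization directly. Let $\alpha=(a_1,\ldots,a_n)\in\IT_n$ and let $\alpha^\uparrow=(a_1',\ldots,a_n')$ be its weakly increasing rearrangement. We must show $a_k'\le k$ for every $k\in[n]$. Since $\alpha^\uparrow$ is weakly increasing, $a_k'\le k$ holds if and only if at least $k$ entries of $\alpha$ are at most $k$. Equivalently, at most $n-k$ entries of $\alpha$ exceed $k$.

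The key step is a counting bound using the defining condition $a_i\le n-i+1$. Suppose $a_i>k$. Then $n-i+1\ge a_i\ge k+1$, so $i\le n-k$. Hence only the indices $i\in\{1,\ldots,n-k\}$ can carry an entry exceeding $k$. This gives at most $n-k$ such entries, and therefore at least $k$ entries with $a_i\le k$. It follows that $a_k'\le k$.

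Since this holds for every $k\in[n]$, and every entry lies in $[n]$ by definition, $\alpha$ satisfies the inequality characterization, so $\alpha\in\PF_n$. There is no real obstacle here. The only point needing care is the translation between the sorted condition $a_k'\le k$ and the counting statement about how many entries are at most $k$, which is immediate from the monotonicity of $\alpha^\uparrow$.
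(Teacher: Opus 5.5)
Your proof is correct. It differs from the paper's mainly in packaging.

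The paper observes that the reverse $\beta$ of $\alpha$ satisfies $b_i=a_{n-i+1}\le n-(n-i+1)+1=i$ termwise. (The paper's displayed $a_{n-i-1}$ and $n-(n-i-1)-1$ are typos for these.) It then simply asserts that a rearrangement bounded termwise by $(1,2,\ldots,n)$ forces the weakly increasing rearrangement to satisfy the same bound. That step is true but left unjustified, and its justification is exactly your counting argument: the first $k$ entries of $\beta$ are at most $k$, so at least $k$ entries of $\alpha$ are at most $k$, hence $a_k'\le k$.

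Your route skips the reversal and works directly with the counting form of the parking condition, so it is fully self-contained. The paper's version is quicker to state but rests on that unstated sorting fact.
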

\begin{proof}
By definition, $\alpha=(a_1,a_2,\ldots,a_n)\in\IT_n$ satisfies $a_i\leq n-i+1$ for all $i\in[n]$. To show that $\alpha$ is a parking function, we show that the weakly increasing rearrangement $\alpha^\uparrow=(a_1',a_2',\ldots,a_n')$ of $\alpha$ satisfies $a_i'\leq i$ for all $i\in [n]$. To do this, we simply show that the reverse of $\alpha$ satisfies this inequality condition; this implies the result. If $\beta$ is the reverse of $\alpha$, then $b_i=a_{n-i-1}\le n-(n-i-1)-1=i$, as desired.
\end{proof}

Moreover, just as with permutations and their inversion tables, we can visualize a Lehmer parking function $\alpha=(a_1,a_2,\ldots,a_n)$ on an $n\times n$ grid diagram, where we place vertices at the points $(a_i,i)$ with $i\in[n]$. 
We illustrate the example $\alpha=(5,2,4,2,1,1)$ in \Cref{fig:Lehmer parking function}. 
Unlike permutations, we let $i$ take the vertical axis and $a_i$ the horizontal axis to accommodate their interpretation using the tuples as parking preferences and using a parking procedure (see \Cref{sec:background on LPFs} for more details). 
For an example of the parking outcome (a permutation encoding the final order of the cars parking on the street; see \Cref{sec:background on LPFs}), see the illustration in \Cref{fig:parking a LPF} for the Lehmer parking function $\alpha=(5,2,4,2,1,1)$.

\begin{figure}[H]
    \centering
    \includegraphics[width=0.3\linewidth]{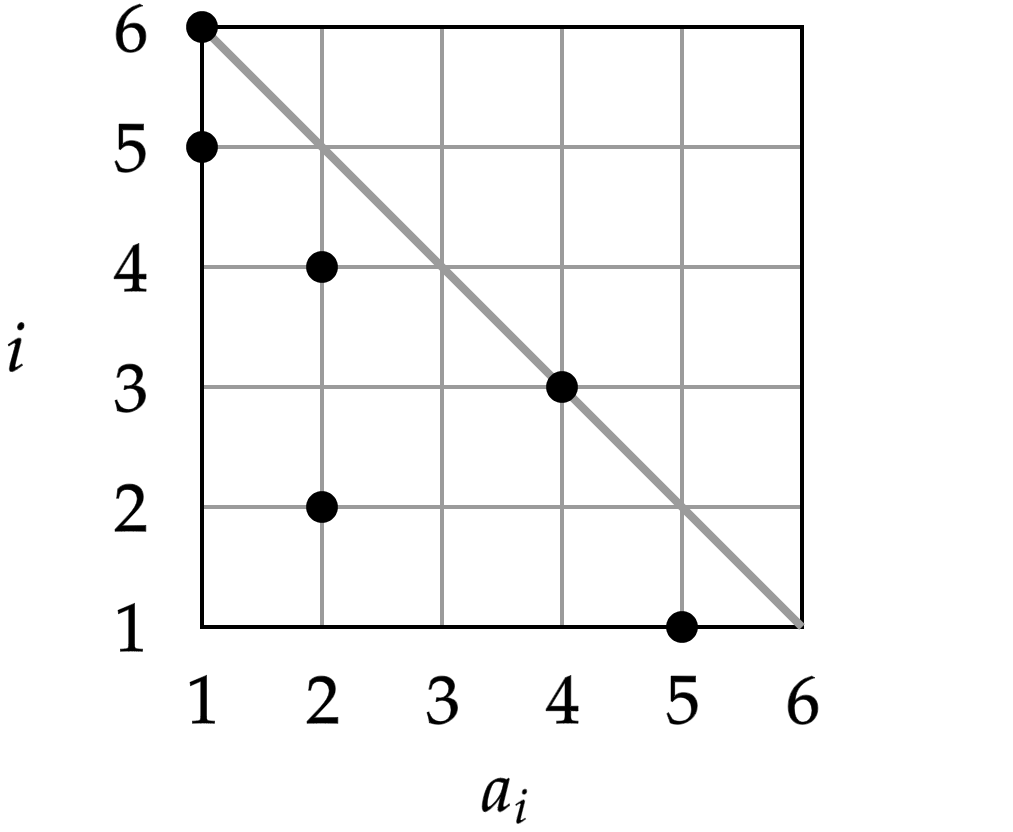}
    \caption{The grid diagram for the Lehmer parking function $\alpha=(5,2,4,2,1,1)$. The entry in each row lies at or below the antidiagonal given by $a_i=n-i+1$.
    }
    \label{fig:Lehmer parking function}
\end{figure}

\begin{figure}[h]
         \centering 
         \resizebox{\textwidth}{!}{
         \begin{tikzpicture}
         \node at(0,0){\includegraphics[width=1in]{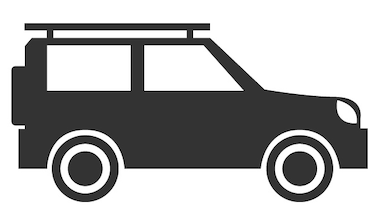}}; 
         \node at(3,0)
         {\includegraphics[width=1in]{car.png}}; \node at(6,0)
         {\includegraphics[width=1in]{car.png}}; \node at(9,0)
         {\includegraphics[width=1in]{car.png}}; \node at(12,0)
          {\includegraphics[width=1in]{car.png}};
          \node at(15,0)
          {\includegraphics[width=1in]{car.png}};
         \node at(0,-.15){\textcolor{white}{\textbf{5}}}; 
         \node at(3,-.15){\textcolor{white}{\textbf{2}}}; 
         \node at(6,-.15){\textcolor{white}{\textbf{4}}}; 
         \node at(9,-.15){\textcolor{white}{\textbf{3}}}; 
         \node at(12,-.15){\textcolor{white}{\textbf{1}}}; 
         \node at(15,-.15){\textcolor{white}{\textbf{6}}}; 
         \node at(.9,1.)
         {\includegraphics[width=.5in]{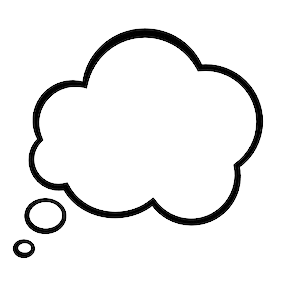}}; 
         \node at(3.9,1.)
         {\includegraphics[width=.5in]{callout.png}}; 
         \node at(6.9,1.)
         {\includegraphics[width=.5in]{callout.png}}; 
         \node at(9.9,1.)
         {\includegraphics[width=.5in]{callout.png}}; 
         \node at(12.9,1.)
         {\includegraphics[width=.5in]{callout.png}}; 
         \node at(15.9,1.)
         {\includegraphics[width=.5in]{callout.png}}; 
         \node at(.9,1.1){\textbf{1}}; 
         \node at(3.9,1.1){\textbf{2}}; 
         \node at(6.9,1.1){\textbf{2}}; 
         \node at(9.9,1.1){\textbf{4}}; 
         \node at(12.9,1.1){\textbf{5}}; 
         \node at(15.9,1.1){\textbf{1}}; 
         \draw[ultra thick](-1.25,-.6)--(1.25,-.6); 
         \draw[ultra thick](1.75,-.6)--(4.25,-.6); 
         \draw[ultra thick](4.75,-.6)--(7.25,-.6); 
         \draw[ultra thick](7.75,-.6)--(10.25,-.6); 
         \draw[ultra thick](10.75,-.6)--(13.25,-.6); 
         \draw[ultra thick](13.75,-.6)--(16.25,-.6); 
         \node at(0,-1){\textbf{1}}; 
         \node at(3,-1){\textbf{2}}; 
         \node at(6,-1){\textbf{3}}; 
         \node at(9,-1){\textbf{4}}; 
         \node at(12,-1){\textbf{5}}; 
         \node at(15,-1){\textbf{6}}; 
         \end{tikzpicture} 
         }
    \caption{Parking cars using the Lehmer parking function $\alpha=(5,2,4,2,1,1)$ results in the cars parking in order (i.e., in outcome) $524316\in\mathfrak{S}_6$. Car image designed by Freepik and callout designed by macrovector / Freepik.}
    \label{fig:parking a LPF}
\end{figure}

\noindent By \Cref{def:Lehmer parking function}, Lehmer parking functions are exactly the tuples whose diagrams have one entry in each row, with all entries lying at or below the \textit{antidiagonal} from $(1,n)$ to $(n,1)$.
In contrast, the grid diagram of a permutation requires that there exists a unique entry appearing in each row and column.

\begin{remark}
    Initially, we used the term inversion tables, as we were focused on permutation literature which used grid diagrams such as \Cref{fig:Lehmer parking function} to study inversions. However, we adopt the name Lehmer parking functions rather than inversion table parking functions so as not to confuse our study with that of inversions \textit{in} parking functions. For more information on work related to inversions in parking functions, see \cite{celano2025inversionsparkingfunctions, celano2025statisticsellintervalparkingfunctions}.
\end{remark}

The paper is organized as follows: In \Cref{sec:background on LPFs}, we fully characterize all possible outcomes of a Lehmer parking function as a class of pattern avoiding permutations (\Cref{thm: characterization of inversion table outcomes}). Once we have this classification, in \Cref{sec:bijection} we prove that the Lehmer parking functions of length $n$ are enumerated by the $n$th Bell number (\cite[\seqnum{A000110}]{OEIS}), through a bijection with the standard set partitions of $[n]$ and a new set of objects called \emph{$g$-balanced spaced parenthesizations} (\Cref{thm:main bijection} and \Cref{def:bsp_n}). In \Cref{sec:catalan}, we provide enumerations through the Catalan numbers (\cite[\seqnum{A000108}]{OEIS}) for the weakly increasing outcomes of Lehmer parking functions of length $n$ (\Cref{thm:weakly_dec_lehmer})

\section{Characterization of outcomes of Lehmer parking functions}\label{sec:background on LPFs} 

A parking function $a=(a_1,a_2,\ldots,a_n)\in[n]^n$ can also be interpreted as a tuple indexing \textit{preferences} of cars labeled $1$ through $n$ in line to park on a one-way street with spots labeled $1$ through $n$. Starting with car $1$, each car $i\in[n]$ parks in the first empty spot at or after its preference $a_i$. Parking functions are exactly the tuples for which all cars park without driving past the last spot. Following \Cref{fig:parking functions}, $(2,2,1)$ is a parking function because all cars can park. Meanwhile, $(2,2,3)$ is not a parking function since spot $1$ is undesired by every car and car three fails to park.
It follows from this interpretation that a tuple $\alpha\in[n]^n$ is a parking function if and only if it contains at least $i$ values less than or equal to $i$ for every $i\in[n]$; this is equivalent to the inequality characterization of parking functions.

The \textit{outcome} $\Out(\alpha)$ of a parking function $\alpha$ is the permutation $\pi=\pi_1\pi_2\cdots\pi_n$, where $\pi_i=j$ denotes that the spot $i$ is occupied by car $j$ after the parking process. For example, $\Out((2,2,1))=312$ (see \Cref{fig:parking functions}) and $\Out((3,1,1))=231$. 
As another example, we illustrate the parking procedure using the Lehmer parking function $\alpha=(5,2,4,2,1,1)$ in \Cref{fig:parking a LPF}, and we note that the parking \textit{outcome} is the permutation consisting of the cars' labels, in this case the permutation $524316$.

\begin{figure}[H]
    \centering
    \includegraphics[width=0.8\linewidth]{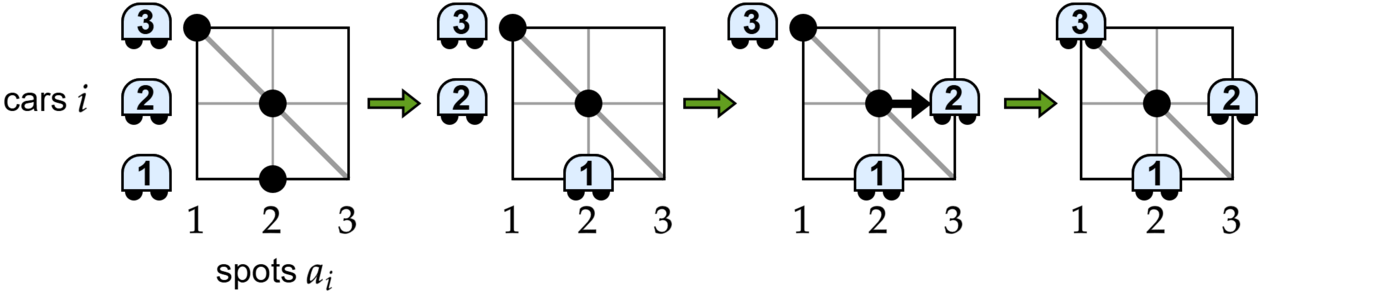}
    \caption{We park the preference list $\alpha=(2,2,1)$. Here, the dot $a_1=2$ on the diagram conveys that the preference of car $1$ is spot $2$. In the third diagram, car $2$ is forced to park in spot $3$ because spot $2$ is occupied by car $1$. The outcome of $\alpha$ is the permutation $\pi=312$.}
    \label{fig:parking functions}
\end{figure}

Every permutation $\pi$ can be realized as the outcome of a parking function (in particular, its inverse $\pi^{-1}$). 
That is, the set of all outcomes of parking functions of length $n$, $\Out(\PF_n)$, is equal to $\Sym_n$. However, this fact does not hold for outcomes of \textit{Lehmer} parking functions in general.
For example, assume for contradiction that $\pi=132$ is the outcome of some Lehmer parking function $\alpha=(a_1,a_2,a_3)$. In \Cref{fig:ipfo non example} we illustrate the parking procedure according to $\pi$. First, car 1 parks in spot 1 and car 2 parks in spot 3. But by \Cref{def:Lehmer parking function}, we have $a_2\leq 2$, so the preference $a_2$ of car $2$ may only be $1$ or $2$. Because spot $1$ is occupied by car $1$ and spot $2$ is empty, either preference causes car $2$ to park in spot $2$. This contradicts that $\pi_3=2$. Hence, $\pi=132$ cannot be the outcome of a Lehmer parking function.

\begin{figure}[H]
    \centering
    \includegraphics[width=0.65\linewidth]{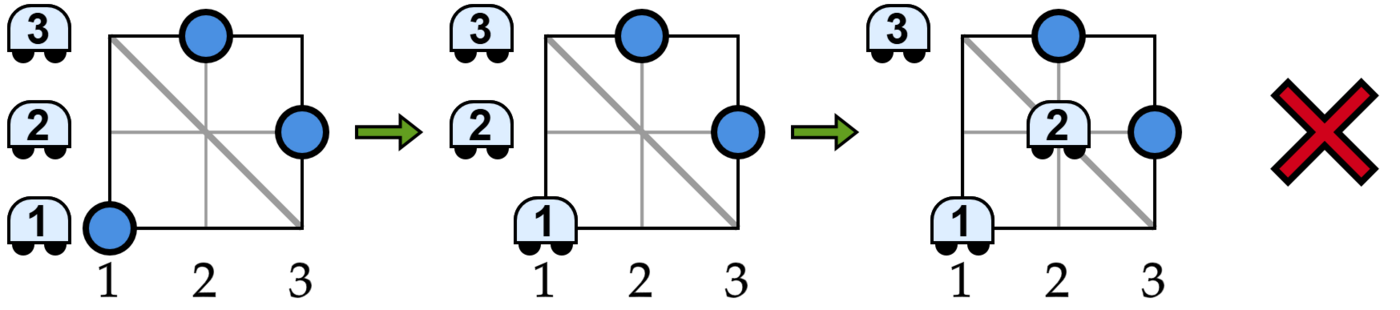}
    \caption{We display the desired outcome $\pi=132$ in blue. If car $1$ parks in spot $1$, it is impossible for car $2$ to park in spot $3$. Thus, $\pi$ is not the outcome of a Lehmer parking function.}
    \label{fig:ipfo non example}
\end{figure}

Given that not every permutation is the outcome of a Lehmer parking function, a natural question is: \textit{Which permutations are the outcomes of Lehmer parking functions?} We answer this question by giving a complete characterization of the set of outcomes of Lehmer parking functions, denoted $\Out (\IT_n)$. This characterization is best described by a property which is visualized on the \textit{permutation diagram} of a permutation $\pi$. We begin with a definition. 
\begin{definition}\label{def:arm_leg_drawing}
    As usual, one can plot a permutation $\pi=\pi_1\pi_2\cdots\pi_n$ on the grid $[n]\times [n]$ by plotting the points $(i,\pi_i)$ for each $i\in[n]$.  
    On this diagram, we additionally draw the following lines:
        \begin{itemize}
            \item The \textit{antidiagonal} $y=-x+(n+1)$, as our diagrams have the upper left corner $(1,n)$
            \item For each entry $(i,\pi_i)$ lying at or above the antidiagonal, that is, those with coordinates satisfying $\pi_i\ge-i+(n+1)=n-i+1$:
            \begin{itemize}
                \item the horizontal line segment connecting the point $(i,\pi_i)$ to the antidiagonal, which we call the \textit{arm}, and 
                \item the vertical line segment connecting the point $(i,\pi_i)$ to the antidiagonal, which we call the \textit{leg}.
            \end{itemize}
            (Note: 
            in our illustration, we extend the arms and legs slightly beyond the antidiagonal so that those coming from entries $(i,\pi_i)$ lying \textit{on} the antidiagonal are more clearly visible.)
        \end{itemize}
    The resulting diagram is called the \textit{arm-leg diagram} of $\pi$. We say that the arm-leg diagram is \textit{non-intersecting} if the arms and the legs do not intersect each other. We note that the arm-leg diagram of a permutation $\pi$ is non-intersecting if and only if it avoids the pattern $n-j+1<n-i+1\le\pi_{j}<\pi_{i}$; see \Cref{perm:running_ex_1}.
\end{definition}

    \begin{figure}[H]
        \centering
        \includegraphics[width=0.7\linewidth]{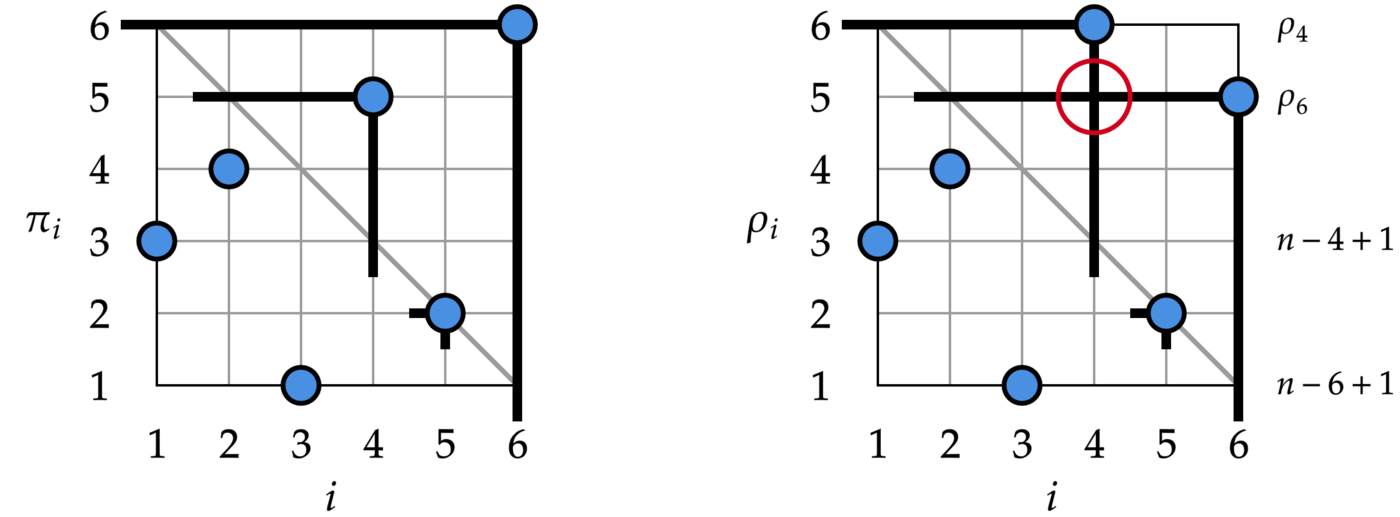}
        \caption{The arm-leg diagrams of $\pi=341526$ and $\rho=341625$, respectively. In blue we identify the entries $(i,\pi_i)$ or $(i,\rho_i)$. The right diagram is intersecting, which can be seen by its containment of the pattern $n-j+1<n-i+1\le\rho_j<\rho_i$ for $i=4$, $j=6$. }
         \label{perm:running_ex_1}
    \end{figure}

Most important to our study are permutations with non-intersecting arm-leg diagrams. For example,
    the arm-leg diagrams of $\pi=341526$ and $\rho=341625$ are given in \Cref{perm:running_ex_1}. Notice that peaks drawn from the points at or above the antidiagonal of $\pi$ do not intersect, indicating that the arm-leg diagram of $\pi$ is non-intersecting. In contrast, the arm-leg diagram of $\rho$ is intersecting as marked by the red-circled intersection in its figure.

Our first result connects the outcome of a Lehmer parking function and its arm-leg diagram.

\begin{proposition}\label{prop:forward1}
    The arm-leg diagram of any $\pi\in \Out (\IT_n)$ is non-intersecting.
\end{proposition}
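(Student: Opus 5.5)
The plan is to argue by contradiction, directly from the parking procedure. By the remark at the end of \Cref{def:arm_leg_drawing}, the arm-leg diagram of $\pi$ is intersecting exactly when there are indices $i<j$ with $n-j+1<n-i+1\le \pi_j<\pi_i$. So suppose $\pi=\Out(\alpha)$ for some $\alpha=(a_1,\ldots,a_n)\in\IT_n$, and that such a pair $i<j$ exists. Write $c=\pi_j$ and $d=\pi_i$. In words, car $c$ ends in spot $j$, car $d$ ends in the earlier spot $i$, and $n-i+1\le c<d$.

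The first step is to bound the preference of car $c$. \Cref{def:Lehmer parking function} gives $a_c\le n-c+1$. From $c\ge n-i+1$ we get $n-c+1\le i$, so $a_c\le i<j$. Thus car $c$ prefers a spot at or before spot $i$ but parks strictly after it, in spot $j$.

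The second step uses the parking rule. Car $c$ takes the first empty spot at or after $a_c$. Since it ends in spot $j>a_c$, every spot in $[a_c,j-1]$ must already be occupied when car $c$ arrives. In particular spot $i$, which satisfies $a_c\le i\le j-1$, is occupied at that moment by one of the cars $1,\ldots,c-1$. Spots never change occupants, so that car is the one finally in spot $i$, namely car $d$. This forces $d<c$, contradicting $c<d$. Hence no such pair exists and the arm-leg diagram of $\pi$ is non-intersecting.

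The only delicate step is the translation between the geometric condition (arms and legs crossing) and the inequality pattern. The paper already records this equivalence, so I would just invoke it. If needed, I would check it quickly: the arm of $(i,\pi_i)$ is the horizontal segment at height $\pi_i$ running left to $x=n+1-\pi_i$, and the leg of $(j,\pi_j)$ is the vertical segment at $x=j$ running down to $y=n+1-j$. These two cross precisely when $i<j$ and $n+1-j<\pi_i$ and $n+1-\pi_i\le j$ hold together with $\pi_j\ge\pi_i$ or the symmetric configuration. Matching this against the stated pattern is routine bookkeeping and does not affect the parking argument above.
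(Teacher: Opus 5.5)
Your proof is correct and is essentially the paper's argument. The Lehmer bound $a_{\pi_j}\le n-\pi_j+1\le i<j$ forces every spot from car $\pi_j$'s preference up to $j-1$, in particular spot $i$, to be filled by an earlier car; this is exactly the paper's observation that all columns between the foot of an arm and its peak hold smaller values, which you run as a contradiction against the explicit pattern rather than geometrically.

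There is one small slip in your optional last paragraph. For $i<j$, the arm of $(i,\pi_i)$ lies in columns at most $i$ and cannot meet the leg at $x=j$; the pattern actually encodes the leg of $(i,\pi_i)$ crossing the arm of $(j,\pi_j)$. Since you rely on the equivalence the paper already records, this does not affect the proof.
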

\begin{proof}
    Suppose that $\pi=\pi_1\pi_2\cdots\pi_n$ is the outcome of a Lehmer parking function $\alpha=(a_1,a_2,\ldots,a_n)$. Suppose that the arm-leg diagram of $\pi$ has an arm $[n-i+1,i]\times\{\pi_i\}$ coming from the point $(i,\pi_i)$.
        By \Cref{def:Lehmer parking function}, we obtain
    \begin{align*}
        a_{\pi_i}\le~&n-\pi_i+1,
        \intertext{and since $(i,\pi_i)$ is at or above the antidiagonal, we obtain $\pi_i\ge n-i+1$, or}
         &n-\pi_i+1 \leq i.
    \end{align*}
    Because car $\pi_i$ has preference $a_{\pi_i}\le n-\pi_i+1$ and parks in spot $i\ge n-\pi_i+1$, we know that all spots $j\in[n-\pi_i+1,i-1]$ are occupied by cars from $\pi_1$ through $\pi_{i-1}$ when $\pi_i$ parks. Thus, $\pi_j<\pi_i$ for all $j\in[n-\pi_i+1,i-1]$, so there is no entry above the arm $[n-i+1,i]\times\{\pi_i\}$ with a leg that intersects it.
\end{proof}

\begin{figure}[H]
    \centering
    \includegraphics[width=0.36\linewidth]{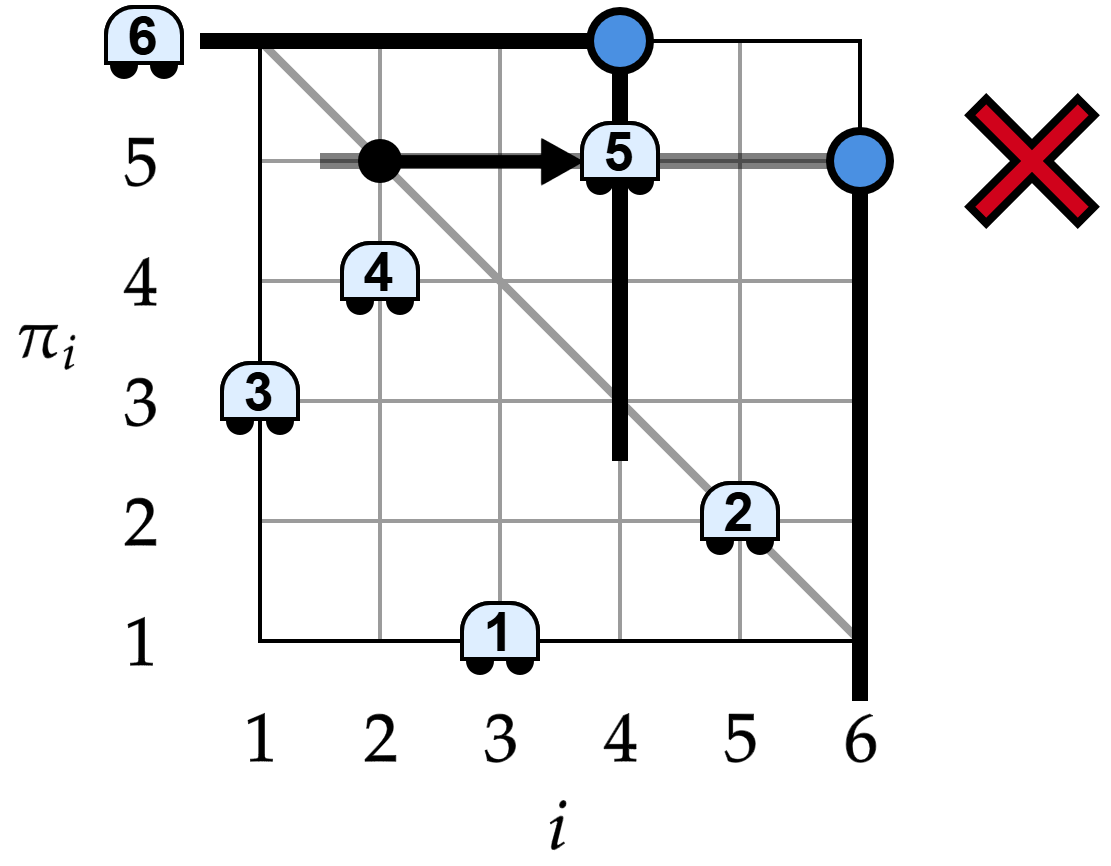}
    \caption{We display the contrapositive of \Cref{prop:forward1}: if $\pi$ has an intersecting arm-leg diagram, then it is not the outcome of a Lehmer parking function. Here, even with preference $a_5=2$, car $5$ stops at the empty spot $4$, which disagrees with $\pi_6=5$.}
\end{figure}

Next, we establish the converse of \Cref{prop:forward1}.

\begin{proposition}\label{prop:backwards1}
   If the arm-leg diagram of a permutation $\pi$ is non-intersecting, then $\pi \in \Out (\IT_n)$. 
\end{proposition}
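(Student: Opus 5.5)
We will construct an explicit Lehmer parking function with outcome $\pi$. Write $\pi^{-1}(c)$ for the spot that car $c$ must occupy. Car $c$ is allowed any preference in $[1,n-c+1]$. We split the cars into two kinds according to whether the point $(\pi^{-1}(c),c)$ lies strictly below the antidiagonal or at/above it.

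\emph{Cars strictly below the antidiagonal.} If $c<n-\pi^{-1}(c)+1$, then $\pi^{-1}(c)\le n-c$, so $\pi^{-1}(c)\le n-c+1$. We set $a_c=\pi^{-1}(c)$. When car $c$ arrives, spot $\pi^{-1}(c)$ is still empty, because only cars $1,\dots,c-1$ have parked and, by induction, they sit in their target spots. So car $c$ parks exactly at $\pi^{-1}(c)$.

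\emph{Cars at or above the antidiagonal.} Here $i=\pi^{-1}(c)$ satisfies $c\ge n-i+1$, so the entry $(i,c)$ has an arm along row $c$ from column $n-c+1$ to column $i$. We set $a_c=n-c+1$, the largest allowed value. To see that car $c$ lands at spot $i$, we need two facts:
\begin{itemize}
\item[(a)] spot $i$ is empty when car $c$ arrives, which is again the inductive hypothesis;
\item[(b)] every spot $j\in[n-c+1,i-1]$ is already occupied, that is, $\pi_j<c$ for all such $j$.
\end{itemize}

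Fact (b) is the main step, and it uses the non-intersecting hypothesis, essentially reversing the argument of \Cref{prop:forward1}. Suppose some $j\in[n-c+1,i-1]$ has $\pi_j>c$. Since $j\ge n-c+1$ and $\pi_j>c$, we get $\pi_j>c\ge n-j+1$, so the entry $(j,\pi_j)$ is above the antidiagonal and carries a leg. That leg runs down column $j$ from height $\pi_j$ to the antidiagonal at height $n-j+1\le c$. It therefore crosses row $c$ at column $j$, which lies in the arm's column range $[n-c+1,i]$. This is an intersection of an arm and a leg, giving the forbidden pattern $n-j+1<n-i+1\le\pi_j<\pi_i$ (with $j$ and $i$ in the appropriate roles), a contradiction.

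The boundary case is the one I expect to need the most care: the leg could meet the arm exactly at the antidiagonal point, when $n-j+1=c$ and $j=n-c+1$. I would check that the paper's convention of extending segments slightly past the antidiagonal counts this as an intersection, or equivalently that the pattern's inequalities cover it. In any case $\pi_j>c$ already puts the leg strictly through row $c$, so the crossing is genuine.

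Running the induction on $c=1,\dots,n$ shows that every car parks at $\pi^{-1}(c)$, so $\Out(\alpha)=\pi$. By construction $a_c\le n-c+1$ for all $c$, so $\alpha\in\IT_n$.
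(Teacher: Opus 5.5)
Your proof is correct and is essentially the paper's argument: your preferences are exactly $a_c=\min(\pi^{-1}(c),n-c+1)$, and you use the same induction on cars with the same split at the antidiagonal, while spelling out how fact (b) follows from the forbidden pattern, which the paper only asserts. One small correction: when $j=n-c+1$ the leg does not pass strictly through row $c$ but ends at $(n-c+1,c)$, which is the arm's left endpoint. That case is still excluded, because the pattern $n-i+1<n-j+1\le \pi_i<\pi_j$ has a non-strict inequality at $n-j+1\le c$.
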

\begin{proof}
    Suppose that the arm-leg diagram of the permutation $\pi=\pi_1\pi_2\cdots\pi_n$ is non-intersecting.
    Note that if car $k$ parks in spot $i$ then $\pi_i=k$, or alternatively, $\pi_k^{-1}=i$. Thus, $\pi$ is the outcome where each car $k$ parks in spot $\pi_k^{-1}$.
    If we only wanted a (not necessarily Lehmer) parking function $\beta=(b_1,b_2,\ldots,b_n)$ with outcome $\pi$,
    it would suffice to set $b_k=\pi_k^{-1}$ for all $k\in[n]$.
    Since we need a \textit{Lehmer} parking function $\alpha=(a_1,a_2,\ldots,a_n)$ with outcome $\pi$, we set
    $a_k=\min(\pi_k^{-1},n-k+1)$ for all $k\in[n]$, satisfying the inequality condition in \Cref{def:Lehmer parking function}.

    It remains for us to show that $\alpha$ has outcome $\pi$. We prove this inductively by showing that each car $k$ parks in spot $\pi_k^{-1}$. First, car $1$ prefers spot $a_1=\min(\pi_k^{-1},n-1+1)=\pi_k^{-1}$ and parks there. Next, assume that cars $1,2,\ldots, k-1$ park in spots $\pi^{-1}_1,\pi^{-1}_2,\ldots,\pi^{-1}_{k-1}$, respectively, leaving spots $\pi^{-1}_k,\pi^{-1}_{k+1},\ldots,\pi^{-1}_{n-1}$ empty. We consider the following cases:
    \begin{itemize}
        \item If $\pi^{-1}_k<n-k+1$, then car $k$ parks in the empty spot $a_k=\min(\pi^{-1}_k,n-k+1)=\pi^{-1}_k$.
        \item If $\pi^{-1}_k\geq n-k+1$, then $a_k=\min(\pi^{-1}_k,n-k+1)=n-k+1$. The only way car $k$ parks in $\pi_k^{-1}$ is if every spot from $n-k+1$ through $\pi^{-1}_k-1$ is occupied. 
        This is true because, by assumption that the arm-leg diagram of $\pi$ is non-intersecting, every spot $j\in[n-k+1,\pi^{-1}_k-1]$ satisfies $\pi_j<\pi_{\pi^{-1}_k}=k$.
    \end{itemize}
    This completes the inductive step and the proof.\qedhere
\end{proof}

The previous two propositions have established the following main result.

\begin{theorem}\label{thm: characterization of inversion table outcomes}
    A permutation $\pi=\pi_1\pi_2\cdots\pi_n$ is an element of $\Out(\IT_n)$ 
    if and only if its arm-leg diagram is non-intersecting, or equivalently, if and only if it
    avoids the pattern $n-j+1<n-i+1\le \pi_j<\pi_i$ for all $i<j\in[n]$.
\end{theorem}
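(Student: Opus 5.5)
The theorem has two parts: the equivalence of membership in $\Out(\IT_n)$ with non-intersection of the arm-leg diagram, and the reformulation of non-intersection as avoidance of the stated pattern. For the first part I will simply combine the two preceding results. \Cref{prop:forward1} gives one direction: every outcome of a Lehmer parking function has a non-intersecting arm-leg diagram. \Cref{prop:backwards1} gives the converse: every permutation with a non-intersecting diagram is realized, namely by the explicit preference list $a_k=\min(\pi^{-1}_k,n-k+1)$. This settles the first ``if and only if'' with no further work.

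The remaining task is the geometric-to-pattern translation, which I will verify directly from \Cref{def:arm_leg_drawing}. A point $(i,\pi_i)$ at or above the antidiagonal has the following arm and leg:
\begin{align*}
\text{arm: } & \{(x,\pi_i): n-\pi_i+1\le x\le i\},\\
\text{leg: } & \{(i,y): n-i+1\le y\le \pi_i\}.
\end{align*}
Arms are horizontal and legs are vertical, and distinct entries lie in distinct rows and columns. So the only possible crossing is an arm of one entry meeting the leg of another entry. Consider the leg of $(j,\pi_j)$ and the arm of $(i,\pi_i)$. They meet exactly when $n-\pi_i+1\le j\le i$ and $n-j+1\le \pi_i\le \pi_j$. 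Since $j\ne i$ and $\pi_j\ne\pi_i$, these inequalities are strict at the upper ends.

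I will then relabel so that the later column is called $j$ and the earlier column $i$, matching the statement's $i<j$. The crossing condition becomes: there are indices $i<j$ with both entries at or above the antidiagonal such that the leg of the left entry $(i,\pi_i)$ crosses the arm of the right entry $(j,\pi_j)$. Unwinding the coordinates, this is exactly $n-j+1<n-i+1\le\pi_j<\pi_i$. Here $n-j+1<n-i+1$ encodes $i<j$. The inequality $n-i+1\le\pi_j$ says the arm at height $\pi_j$ lies at or above the bottom of the left entry's leg, so that leg reaches the arm; this condition also forces $(j,\pi_j)$ above the antidiagonal. The inequality $\pi_j<\pi_i$ says the leg extends above height $\pi_j$. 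Finally, the arm of $(j,\pi_j)$ spans horizontally back to column $n-\pi_j+1\le i$, which holds because $\pi_j\ge n-i+1$.

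The main obstacle is bookkeeping: getting the roles of $i$ and $j$ right and checking that the boundary cases, where an entry lies on the antidiagonal, are treated as the definition intends (touching counts as intersecting). I will check this against the example $\rho=341625$ with $i=4$, $j=6$. There $n-j+1=1<n-i+1=3\le\rho_6=5<\rho_4=6$, and this pair is exactly the red-circled crossing in \Cref{perm:running_ex_1}. Once the translation is confirmed, the equivalence chain completes the proof.
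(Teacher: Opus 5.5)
Your proposal is correct and matches the paper, which obtains the theorem by combining \Cref{prop:forward1} and \Cref{prop:backwards1}, exactly as you do. Your coordinate check that an arm--leg crossing is equivalent to the pattern $n-j+1<n-i+1\le\pi_j<\pi_i$ fills in a step the paper only asserts in \Cref{def:arm_leg_drawing}; in that check, touching at the antidiagonal counts as a crossing, which is what the paper's example $\pi=132$ requires.
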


\section{Main Bijection}\label{sec:bijection}
Having characterized the outcomes of Lehmer parking functions in the previous section, we now establish a bijection to a classical combinatorial object: set partitions. 
We recall that a \textit{set partition} of $[n]$ is a set of nonempty disjoint subsets, or \textit{blocks}, whose union is $[n]$.
\begin{theorem}\label{thm:main bijection}
    Outcomes of Lehmer parking functions of length $n$ are in bijection with set partitions of $[n]$.
\end{theorem}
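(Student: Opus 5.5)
The plan is to use \Cref{thm: characterization of inversion table outcomes}: it is enough to count the permutations whose arm-leg diagram is non-intersecting, and to show that their number satisfies the Bell recursion. That gives a recursive bijection with set partitions, which I would then try to make explicit. Throughout I use the dynamic description from the proof of \Cref{prop:backwards1}. When car $k$ arrives, $k-1$ spots are occupied. Its preference is any $a_k\in[1,n-k+1]$, so car $k$ can end up in exactly the following spots:
\begin{itemize}
\item any empty spot in $[1,n-k]$;
\item the first empty spot at or after $n-k+1$, which always exists because $n-k+1$ spots are still empty.
\end{itemize}
Parking is deterministic, so an outcome is the same thing as a sequence of such choices, one per car. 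Hence $|\Out(\IT_n)|$ equals the number of choice sequences.

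First I would record small cases directly ($n=1,2,3$ give $1,2,5$) as a sanity check. Next I would aim for a recursion of the form $|\Out(\IT_{n+1})|=\sum_{k}\binom{n}{k}|\Out(\IT_k)|$, mirroring the way a set partition of $[n+1]$ is obtained by choosing the block containing $n+1$. The natural candidate decomposition is by the entry with label $n+1$ (the last car). It always parks in the first empty spot, and in the pattern formulation an entry equal to the maximum never creates an intersection through its own arm. So the remaining structure should be controlled by which spots are occupied to its right versus its left.

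Concretely, I would try to show the following. If the spot of the maximal entry splits the remaining labels into a set $S$ of size $k$ to its right and the complement to its left, then two things hold. First, the left part is forced: it is an increasing arrangement, or more generally determined by $S$. Second, the right part, after standardization, is an arbitrary non-intersecting permutation of length $k$. One must check that the antidiagonal shifts correctly under this standardization. That check uses the pattern $n-j+1<n-i+1\le\pi_j<\pi_i$ and the observation that the relevant arms all lie in a consistent region.

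The main obstacle is exactly this decomposition step. The antidiagonal depends on $n$, so deleting or standardizing entries moves it, and I must verify that non-intersection is preserved in both directions. If the decomposition by the maximal label does not split cleanly, my fallback is a generating-tree argument on the choice sequence. I would define a statistic $s_k$, for instance the number of empty spots in $[1,n-k]$ when car $k$ arrives, reading cars in reverse so that the window grows. I would then show that $s_k$ behaves like the number of blocks in a restricted growth word: each step either keeps it or increases it by one, and the count of choices equals (current number of blocks)$+1$. Once either the recursion or this growth rule is established, the bijection with set partitions of $[n]$ follows by the standard correspondence with restricted growth functions. I expect to then present the map explicitly through the $g$-balanced spaced parenthesizations mentioned in the introduction.
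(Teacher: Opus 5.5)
\textbf{There is a genuine gap: neither route works as you describe it.}

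\textbf{The decomposition by the maximum.} It fails at exactly the step you flagged, and it fails in the opposite direction from your prediction. The arm of the maximal entry $(p,n)$ is indeed never crossed. Its leg, however, occupies column $p$ at heights $n-p+1$ to $n$, and it is crossed by the arm of every entry $(j,\pi_j)$ with $j>p$ and $\pi_j\ge n-p+1$. So the $n-p$ entries to the right of the maximum must be exactly $\{1,\dots,n-p\}$. The set $S$ is forced, not free. You are right that the right part is then an arbitrary element of $\Out(\IT_{n-p})$. But the left part is not forced.

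Already for $n=3$ this breaks your recursion. The outcomes are $123,213,231,312,321$, and $132$ is excluded. Putting the maximum in positions $3,2,1$ therefore gives $2,1,2$ outcomes. Your scheme predicts $\binom{2}{0}B_0,\binom{2}{1}B_1,\binom{2}{2}B_2=1,2,2$.

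Done correctly, this decomposition gives $|\Out(\IT_n)|=\sum_k A_{n-1-k}\,|\Out(\IT_k)|$. Here $A_m$ counts permutations of $[m]$ avoiding the pattern relative to an antidiagonal shifted up by one; equivalently, $A_m$ counts elements of $\Out(\IT_{m+1})$ ending in $m+1$. The first values are $A_m=1,1,2,6,22,\dots$. These weights are not $\binom{n-1}{k}$: for $n=4$ the terms are $6,2,2,5$ rather than $1,3,6,5$. So no Bell recursion comes out of this split without a further idea.

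\textbf{The generating-tree fallback.} Your count of $e_k+1$ choices for car $k$ is correct, where $e_k$ is the number of empty spots in $[1,n-k]$ when car $k$ arrives. The problem is that $e_k$ is determined by cars $1,\dots,k-1$. If you read the cars in reverse, the number of options at a step depends on choices not yet made, so this is not a generating tree.

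The growth rule you propose is also false. For $n=3$, if car $1$ takes spot $1$, then $e_1=2$ and $e_2=0$, a jump of two. Even the option profiles differ. Read in reverse, the five choice sequences for $n=3$ have profile $(1,1,3)$ once and $(1,2,3)$ four times. Restricted growth words have $(1,2,2)$ twice and $(1,2,3)$ three times. So no step-by-step match with restricted growth words exists for this statistic.

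\textbf{The missing idea, which is what the paper does.} Split off the peaks: the entries on or above the antidiagonal. These are exactly the cars that take the ``first empty spot at or after $n-k+1$'' option.
\begin{enumerate}
\item Because peaks do not cross, their arm and leg coordinates form a nested family $(\arms_\pi,\legs_\pi)\in\BSP_n$.
\item This is exactly the kind of object that $(\minB,\maxB)$ is for a set partition.
\item One then shows that both fibers over a fixed $(F,L)$ have size $\prod_{i\notin F}d_i$.
\end{enumerate}
For outcomes, this count comes from filling the empty rows top to bottom, that is, placing cars in decreasing order. This is the correct form of your ``reverse reading'': once the peaks are fixed, the free columns in $[1,i-1]$ depend only on entries already placed, and there are exactly $d_i$ of them. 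For set partitions, each non-minimum $i$ joins one of the $d_i$ blocks open at $i$.
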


Before our proof of this result, we recall that it is well-known that the number of set partitions of $[n] $ is given by 
$B_n$, the $n$-th Bell number (\cite[\seqnum{A000110}]{OEIS}).
Hence, an immediate consequence of \Cref{thm:main bijection} is as follows.
\begin{corollary}
    For all $n\geq 0$, 
    $|\Out(\IT_n)| = B_n$.
\end{corollary}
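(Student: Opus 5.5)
The corollary follows directly from \Cref{thm:main bijection}, which we may assume. That theorem gives a bijection between $\Out(\IT_n)$ and the set of set partitions of $[n]$. Bijections preserve cardinality, so $|\Out(\IT_n)|$ equals the number of set partitions of $[n]$. By the standard fact recalled above, this number is the Bell number $B_n$. So for $n\ge 1$ the argument is a single line: combine \Cref{thm:main bijection} with the definition of $B_n$.

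The only other point is the boundary case $n=0$, since \Cref{thm:main bijection} is stated for length-$n$ objects and the paper's definitions are set up with $n\ge1$ in mind. Here I would check the conventions directly. $\IT_0$ consists of the single empty tuple, whose outcome is the empty permutation, so $|\Out(\IT_0)|=1$. The empty set has exactly one set partition (the empty partition), so $B_0=1$ and the two sides agree.

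As a sanity check, I would also confirm small cases against the characterization in \Cref{thm: characterization of inversion table outcomes}. For $n=1,2$ every permutation avoids the forbidden pattern, which gives $1$ and $2$ outcomes, matching $B_1$ and $B_2$. For $n=3$ the only non-intersecting failure is $132$, as shown in \Cref{fig:ipfo non example}, leaving $5=B_3$ outcomes.

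There is no real obstacle at this stage, because all the combinatorial work lives in the proof of \Cref{thm:main bijection}. If I instead had to prove the count independently of that bijection, I would use \Cref{thm: characterization of inversion table outcomes}. The plan would be to derive the recurrence $|\Out(\IT_{n+1})|=\sum_k\binom{n}{k}|\Out(\IT_k)|$ by conditioning on the position of the value $n+1$. The hard step there would be showing that removing that entry leaves a smaller non-intersecting diagram in a controlled way.
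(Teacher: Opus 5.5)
Your proposal is correct and follows the paper's route: the paper also presents this corollary as an immediate consequence of \Cref{thm:main bijection} together with the fact that $B_n$ counts the set partitions of $[n]$. Your explicit check of the case $n=0$ (one empty tuple, one empty outcome, $B_0=1$) adds something, since the paper states the corollary for all $n\ge 0$ but builds its bijection only with $n\ge 1$ in mind.
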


To prove \Cref{thm:main bijection}, we define a new combinatorial set (see \Cref{def:gBSP}) which parameterizes both the outcomes of Lehmer parking functions and set partitions. In particular, an element of this set is given by an arrangement of parentheses with some additional digits. In \Cref{sec:invPFoutcome}, we define this set and prove that it is in bijection with the outcomes of Lehmer parking functions. In \Cref{sec:setpartition}, we prove that this set is also in bijection with set partitions, thus proving \Cref{thm:main bijection}.

\subsection{Outcomes of Lehmer Parking Functions and Balanced Spaced Parenthesizations}\label{sec:invPFoutcome}

Our approach is as follows. In \Cref{Out(IT) to BSP}, we define a set $\BSP_n$ and a function $\Phi:\Out(\IT_n)\to\BSP_n$. In \Cref{BSP to Out(IT)} we see that this function is surjective but not injective. To fix this, we extend $\BSP_n$ and $\Phi$ to a new set $\GBSP_n$ and function $\Phi':\Out(\IT_n)\to\GBSP_n$ which is a bijection.

\subsubsection{From $\Out(\IT_n)$ to $\BSP_n$}\label{Out(IT) to BSP}

Given an outcome of a Lehmer parking function, $\pi$, we define a ``peak'' of its corresponding arm-leg diagram to be made of an entry at or above the antidiagonal together with the arm and the leg attached to it.
We see that the peak visually ``contains'' a segment of the antidiagonal (see \Cref{fig:ex parenthesis}, left). 
Because the peaks of the arm-leg diagram do not intersect (\Cref{thm: characterization of inversion table outcomes}), they can nest inside of each other, much like pairs of parentheses. 

To formalize this observation, we proceed as follows.
To each coordinate point on the antidiagonal we assign a labeled space drawn as a small line segment below the antidiagonal (see \Cref{fig:ex parenthesis}, center).
These spaces are labeled consecutively with $1$ at the upper-left corner $(1,n)$ and with $n$ at the lower-right corner $(n,1)$.
For a peak $(i,\pi_i)$, the arm corresponds to an opening parenthesis at space $n-\pi_i+1$, and the leg corresponds to a closing parenthesis at space $i$.
In \Cref{fig:ex parenthesis}, center, the peak $(4,5)$ ``contains" spaces $2$ through $4$.
We introduce this technical definition next.

\begin{figure}[H]
    \centering
    \includegraphics[width=0.9\linewidth]{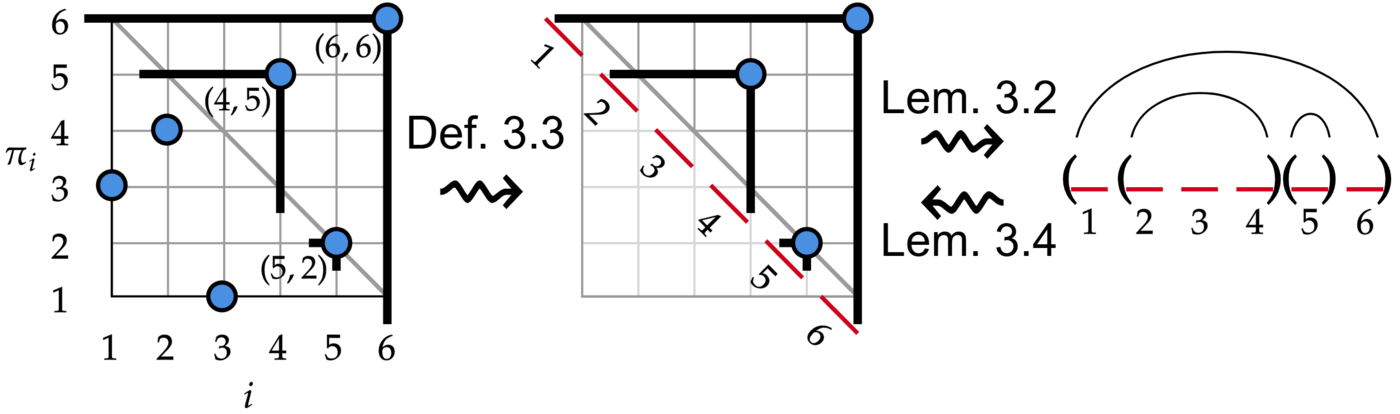}
    \caption{Peaks of the arm-leg diagram of $\pi=341526\in\Out(\IT_n)$ ``contain'' segments of the antidiagonal (left). We illustrate this by placing labeled spaces under the antidiagonal (center). Finally, the peaks of $\pi$ are assigned the pictured \textit{balanced spaced parenthesization} (right). } 
     \label{fig:ex parenthesis}
\end{figure}

\begin{definition}
    Let $\pi=\pi_1\pi_2\cdots\pi_n$ be an outcome of a Lehmer parking function of length $n$ (that is, $\pi\in  \Out(\IT_n)$).
    We recall that an entry $(i,\pi_i)$ lies at or above the antidiagonal if it satisfies $\pi_i\geq n-i+1$.
    We define
    \begin{align*}
        \arms_\pi&=\{n-\pi_i+1~:~(i,\pi_i)\mbox{ lies at or above the antidiagonal}\},\text{  and}\\
        \legs_\pi&=\{i~:~(i,\pi_i)\mbox{ lies at or above the antidiagonal}\}.
    \end{align*}
\end{definition}
\noindent For example, the arm-leg diagram of $\pi=341526$ has peaks $(4,5)$, $(5,2)$, and $(6,6)$, giving us $\arms_\pi=\{1,2,5\}$ and $\legs_\pi=\{4,5,6\}$. In \Cref{fig:ex parenthesis}, we extract the set of peaks from the arm-leg diagram and assign matching pairs of parentheses to them.

Next, we define an appropriate notion of parenthesization. 
First, we recall the conventional definition of balanced parenthesis as follows \cite[Section 1.1]{catalan_ex}. 
We start with a partial sum set equal to 0. Moving left to right, we add 1 to the partial sum after passing each opening parenthesis `(', and we subtract 1 after passing each closing parenthesis `).' 
A parenthesization is \textit{balanced} if the partial sum is always nonnegative. 
We modify this definition to capture information from outcomes of Lehmer parking functions.
\begin{definition}\label{def:bsp_n}
    We begin with $n$ spaces labeled $1$ through $n$. A \textit{spaced parenthesization} of length $n$ adds at most one opening parenthesis `(' before each space and at most one closing parenthesis `)' after each space.
    We denote this by a pair $(F,L)$ of subsets of $[n]$ of equal size; $F$ is the set of labels of the spaces which are preceded by an opening parenthesis `(', and $L$ is the set of labels of the spaces which are followed by a closing parenthesis `).' 
    We define the \textit{parenthesization depth} $d_i$ at space $i$ to be the number of `('s before the $i$-th space minus the number of `)'s before the $i$-th space. 
    Equivalently, \[d_i=|F\cap[1,i]|-|L\cap[1,i-1]|.\] 
    A spaced parenthesization $(F,L)$ is \textit{balanced} if the depth $d_i$ is positive at each space $i\in[n]$. We let $\BSP_n$ denote the set of balanced spaced parenthesizations of length $n$.
\end{definition}
We illustrate \Cref{def:bsp_n} next. 
\begin{example}\label{ex:balanced ex} 
The balanced spaced parenthesization $(\{1,3,5\},\{5,6,7\})$ can be illustrated by
\[\begin{smallmatrix}(\_&\_&(\_&\_&(\_)&\_)&\_)\\\,1&2&\,\,3&4&\,5\,&6\,\,&7\,\end{smallmatrix}.\]
For each $i\in[7]$, placing the value $d_i$ at the labeled space $i$ yields
\[\begin{smallmatrix}(\underline{\,1\,}&\underline{\,1\,}&(\underline{\,2\,}&\underline{\,2\,}&(\underline{\,3\,})&\underline{\,2\,})&\underline{\,1\,})\\\,\,1&2&\,\,3&4&\,5\,&6\,\,&7\,\,\end{smallmatrix}.\]
Because the depth $d_i$ is positive at each $i\in[7]$, all spaces are contained in some matching pair of parentheses. We can thus conclude that $1\in F$ and $7\in L$; this fact holds in general, so that $1\in F$ and $n\in L$. 
\end{example}

Examples failing the definition of balanced spaced parenthesizations include:
\begin{itemize}
    \item 
$(\{1,2\},\{4,4\}=\{4\})=(\_\ (\_\ \_\ \_))$, because it has more than one closing parenthesis after a space; 
\item $(\{1,2,4\},\{3,4\})=(\_\ (\_\ \_)\ (\_)$, because $|F|\ne|L|$; and
\item $(\{1,4\},\{2,4\})=(\_\ \_)\ \_\ (\_)$, because $d_3=0$ indicates that it is not balanced.
\end{itemize}

Next we show that the arm-leg diagram of an outcome of a Lehmer parking function corresponds to a balanced spaced parenthesization.  
To do so, first we show that the depth $d_i$ is nonnegative for each space $i\in[n]$, and then we show that $d_i>0$ for each space $i\in[n]$. 
In order to prove this result, we make some observations and prove an additional technical result. 
To begin, we rephrase \Cref{def:bsp_n} by noting that the parenthesization depth $d_i$ at a labeled space $i$ of a balanced spaced parenthesization is the number of matching pairs of parentheses containing the space $i$. 
We first show in \Cref{lem:d_i = leg intersections} that for the spaced parenthesization $(\arms_\pi,\legs_\pi)$, $d_i$ is the number of peaks of the arm-leg diagram of $\pi$ that visually ``contain'' the $i$-th space. 
For example, in \Cref{fig:ex parenthesis}, center, space $4$ is contained in the two peaks $(4,5)$ and $(6,6)$, so $d_4=2$. 
Similarly, space $5$ is contained in the two peaks $(5,2)$ and $(6,6)$, so $d_5=2$. 
However, spaces $1$ and $6$ are only contained in the peak $(6,6)$, so $d_1=d_6=1$.
In general, we note that space $i$ is visually contained in exactly the peaks which lie in the box $[i,n]\times[n-i+1,n]$ (see \Cref{fig:horizontal line for d_i} for an illustration).

\begin{lemma}\label{lem:d_i = leg intersections}
    Let $\pi\in \Out (\IT_n)$ and consider the spaced parenthesization $(\arms_\pi,\legs_\pi)$. Then for all $i\in [n]$, $d_i$ is the number of peaks of the arm-leg diagram of $\pi$ lying in the box $[i,n]\times[n-i+1,n]$. Consequently, $d_i$ is nonnegative for all $i\in[n]$.
\end{lemma}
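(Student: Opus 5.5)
We prove the statement by direct counting. By \Cref{def:bsp_n},
\[d_i=|\arms_\pi\cap[1,i]|-|\legs_\pi\cap[1,i-1]|.\]
Index the peaks by their columns $k$, so the peaks are exactly the points $(k,\pi_k)$ with $\pi_k\ge n-k+1$. Each peak contributes the arm $n-\pi_k+1$ to $\arms_\pi$ and the leg $k$ to $\legs_\pi$. The map $k\mapsto n-\pi_k+1$ is injective because $\pi$ is a permutation, and $k\mapsto k$ is trivially injective. Hence $|\arms_\pi\cap[1,i]|$ is the number of peaks with $n-\pi_k+1\le i$, that is, with $\pi_k\ge n-i+1$. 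Likewise $|\legs_\pi\cap[1,i-1]|$ is the number of peaks with $k\le i-1$. Writing $P$ for the set of peaks, we get
\[d_i=\#\{k\in P:\pi_k\ge n-i+1\}-\#\{k\in P:k\le i-1\}.\]

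Next we compare these two sets. The box $[i,n]\times[n-i+1,n]$ consists of the peaks with $k\ge i$ and $\pi_k\ge n-i+1$. So the first set splits into the peaks in the box and the peaks with $k\le i-1$ and $\pi_k\ge n-i+1$. The claim therefore reduces to showing that every peak with $k\le i-1$ automatically satisfies $\pi_k\ge n-i+1$. This holds because a peak satisfies $\pi_k\ge n-k+1$, and $k\le i-1$ gives $n-k+1\ge n-i+2>n-i+1$. Thus $\{k\in P:k\le i-1\}$ is contained in the first set, and after subtracting, $d_i$ equals the number of peaks in the box. In particular $d_i\ge 0$, since it counts something.

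The only delicate step is the containment just described: the leg-count must be a subset of the arm-count. That is where the antidiagonal condition defining peaks is used. No appeal to the non-intersecting property (\Cref{thm: characterization of inversion table outcomes}) is needed for this lemma. We also need to check that the pair $(\arms_\pi,\legs_\pi)$ really has at most one parenthesis per position and equal sizes, which is exactly the injectivity noted above.
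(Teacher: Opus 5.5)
Your proof is correct and follows essentially the same argument as the paper. The paper observes that each peak's leg is at least its arm, so each peak contributes $0$ or $1$ to $d_i$, with contribution $1$ exactly when the arm is $\le i$ and the leg is $\ge i$. Your containment of $\{k\in P: k\le i-1\}$ inside the arm-count set is the same inequality phrased set-theoretically. Your explicit injectivity remark also makes precise a point the paper leaves implicit.
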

\begin{proof}
    Fix arbitrary $i\in[n]$.
    To compute $d_i=|\arms_\pi\cap[1,i]|-|\legs_\pi\cap[1,i-1]|$, we check for each peak $(k,\pi_k)$ of the arm-leg diagram of $\pi$ whether the arm is in $[1,i]$ and whether the leg is in $[1,i-1]$. 
    By definition, the peak $(k,\pi_k)$ lies at or above the antidiagonal, so $\pi_k\ge n-k+1$, or $k\ge n-\pi_k+1$. Thus, the leg $k$ of a peak is greater than or equal to the arm $n-\pi_k+1$, so a peak can only contribute $0$ or $1$ to the total $d_i=|\arms_\pi\cap[1,i]|-|\legs_\pi\cap[1,i-1]|$.
    The peak $(k,\pi_k)$ contributes $1$ to $d_i$ if and only if the arm $n-\pi_k+1$ lies in the interval $[1,i]$ and the leg $k$ lies in the interval $[i,n]$. This happens if and only if $k\in[i,n]$ and $\pi_k\in[n-i+1,n]$, as desired.
    This establishes that $d_i$  is nonnegative at each space $i\in[n]$.
\end{proof}

\begin{figure}[H]
    \centering
    \includegraphics[width=0.35\linewidth]{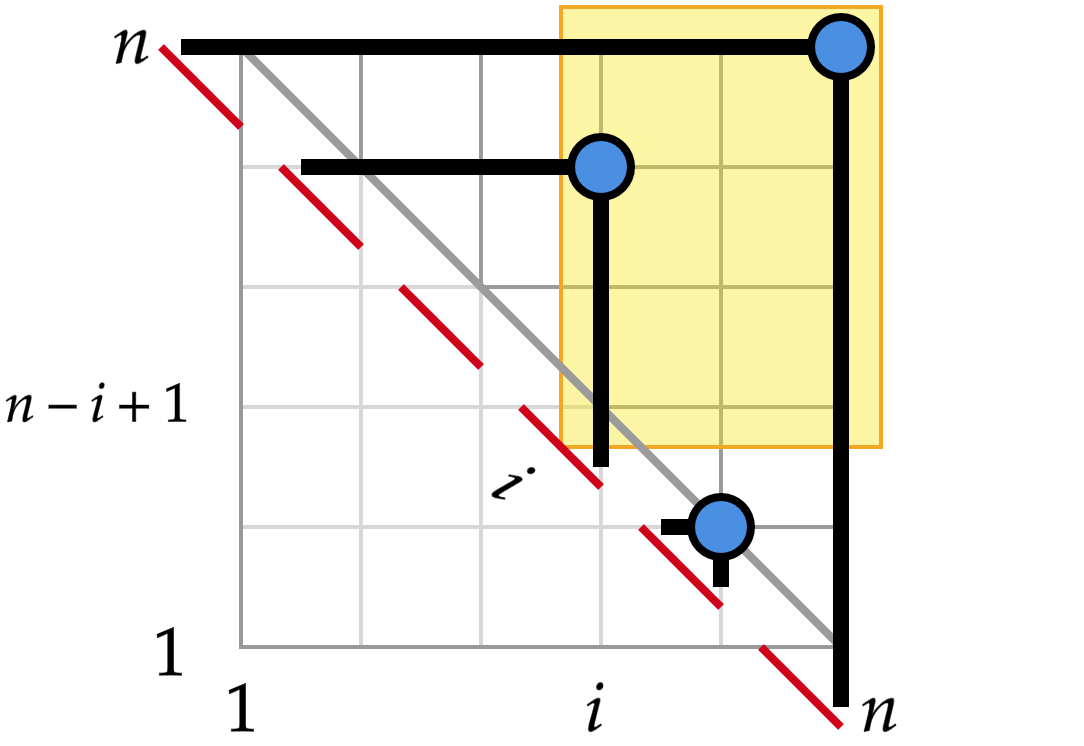}
    \caption{The $i$-th space is visually contained in two peaks; that is, the yellow box contains two entries. \Cref{lem:d_i = leg intersections} states that this is equivalent to $d_i=2$.}
    \label{fig:horizontal line for d_i}
\end{figure}

It now suffices to show that $d_i$ is \textit{positive} at each $i\in[n]$.

\begin{lemma}\label{lem:bsp connection}
    If $\pi \in \Out(\IT_n)$, then the spaced parenthesization given by $(\arms_\pi,\legs_\pi)$ is balanced, i.e., $d_i$ is positive at each space $i\in[n]$.
\end{lemma}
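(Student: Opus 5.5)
By \Cref{lem:d_i = leg intersections}, $d_i$ equals the number of peaks of the arm-leg diagram of $\pi$ lying in the box $[i,n]\times[n-i+1,n]$. So the plan is to show that this box always contains at least one entry $(k,\pi_k)$ of $\pi$, and that any such entry is automatically a peak. The argument will be a simple pigeonhole count, and it should not need the non-intersecting property beyond its use in \Cref{lem:d_i = leg intersections}.

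Before that, I will confirm that $(\arms_\pi,\legs_\pi)$ is a genuine spaced parenthesization. The map from peaks to arms is $(k,\pi_k)\mapsto n-\pi_k+1$, and it is injective because the values $\pi_k$ are distinct. The map from peaks to legs is $(k,\pi_k)\mapsto k$, which is clearly injective. Hence $|\arms_\pi|=|\legs_\pi|$, both subsets lie in $[n]$, and each space carries at most one opening and at most one closing parenthesis.

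Now fix $i\in[n]$ and consider the $i$ values $n-i+1,n-i+2,\ldots,n$. Since $\pi$ is a permutation, each of these values appears in exactly one position of $\pi$. Only $i-1$ positions lie in $[1,i-1]$, so at least one of these $i$ values must appear at some position $k\ge i$. That is, there exists $k\in[i,n]$ with $\pi_k\in[n-i+1,n]$.

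This entry is a peak, since $\pi_k\ge n-i+1\ge n-k+1$ places it at or above the antidiagonal. It also lies in the box $[i,n]\times[n-i+1,n]$. By \Cref{lem:d_i = leg intersections}, $d_i\ge 1$ for every $i\in[n]$, so the spaced parenthesization is balanced.

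I do not expect a real obstacle. The only subtle point is to check carefully that an entry in the box is a peak, and this is exactly the inequality $n-i+1\ge n-k+1$ for $k\ge i$.
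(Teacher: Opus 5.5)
Your proof is correct and follows the paper's argument: both invoke \Cref{lem:d_i = leg intersections} and then count the $i$ entries in the top $i$ rows against the only $i-1$ leftmost columns to find an entry in the box $[i,n]\times[n-i+1,n]$. You also check explicitly that such an entry is automatically a peak and that $|\arms_\pi|=|\legs_\pi|$, two details the paper leaves implicit.
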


\begin{proof}
    Fix arbitrary $i\in[n]$. By \Cref{lem:d_i = leg intersections}, $d_i$ is positive if there is a peak in the box $[i,n]\times[n-i+1,n]$.
    In the arm-leg diagram of $\pi$, there are $i$ entries
    in the top $i$ rows (numbered $n-i+1$ to $n$), and only $i-1$ of them can lie in the leftmost $i-1$ columns (numbered $1$ to $i-1$). 
    Thus, at least one entry lies at or to the right of column $i$.
    This entry lies in the desired box $[i,n]\times[n-i+1,n]$.
\end{proof}
In the next subsection, we describe a bijection between the sets of interest $\Out (\IT_n)$ and $\GBSP_n$.

\subsubsection{From $\BSP_n$ to $\Out(\IT_n)$}\label{BSP to Out(IT)}

\Cref{lem:bsp connection} states equivalently that the function 
\begin{align}\label{eq:Phi}
    \Phi:\Out(\IT_n)\to\BSP_n,\quad\pi\mapsto(\arms_\pi,\legs_\pi)    
\end{align}
is well-defined. The next step is to show that $\Phi$ is surjective; that is, that every balanced spaced parenthesization can be written as $(\arms_\pi,\legs_\pi)$ for some $\pi$ which is an outcome of a Lehmer parking function. We accomplish this by constructing such an outcome $\pi$ in two steps. 
First, we place entries at or above the antidiagonal in \Cref{lem:balanced -> non-intersecting partial}. Second, we place entries below the antidiagonal in \Cref{lem:Phi is surjective}, completing the permutation diagram of $\pi$.

Before stating and proving \Cref{lem:balanced -> non-intersecting partial}, we build intuition using \Cref{fig:ex parenthesis}: A balanced spaced parenthesization  determines a set of matching pairs of parentheses, and each possible pair of parentheses corresponds to a particular coordinate point at or above the antidiagonal. 
For example, \Cref{fig:ex parenthesis} (right) displays the balanced spaced parenthesization $(\{1,2,5\},\{4,5,6\})$ with matching pairs $[1,6]$, $[2,4]$, and $[5,5]$ (see \Cref{fig:ex parenthesis}, center), which can be converted into peaks $(6,6)$, $(4,5)$, and $(5,2)$, respectively. 
The fact that matching pairs of parentheses nest inside of each other guarantees that the construction in \Cref{lem:balanced -> non-intersecting partial} is non-intersecting. 
We use the following definition to discuss the set of peaks in isolation:

\begin{definition}\label{def:partial}
    A \textit{partial arm-leg diagram} $\tau$ is a subset of coordinate points $(i,\tau_i)$ in the grid $[n]\times[n]$ which lie at or above the antidiagonal (i.e. $\tau_i\geq n-i+1$), with no two entries in the same row or column. We define $\arms_\tau$ and $\legs_\tau$ analogously, i.e., $\arms_\tau=\{n-\tau_i+1~:~(i,\tau_i)\in\tau\}$ and $\legs_\tau=\{i~:~(i,\tau_i)\in\tau\}$. We define $\tau$ to be non-intersecting if the arms and legs do not intersect each other, or equivalently, if it avoids the pattern $n-j+1<n-i+1\le\tau_j<\tau_i$ for $(i,\tau_i),(j,\tau_j)\in\tau$.
\end{definition}

An example of a partial arm-leg diagram is shown in the center of \Cref{fig:ex parenthesis}. We remark that for $\pi$, an outcome of a Lehmer parking function, the balanced spaced parenthesization $(\arms_\pi,\legs_\pi)$ only depends on the peaks of the arm-leg diagram of $\pi$; that is, it can written as $(\arms_\tau,\legs_\tau)$ where $\tau$ is the set of peaks of the arm-leg diagram of $\pi$.

We remark additionally that not every partial arm-leg diagram is the set of peaks of an arm-leg diagram of an outcome of a Lehmer parking function. For example, if $\tau$ is empty, then there is no entry in the top row or in the rightmost column, indicating that $\tau$ is not the set of peaks of a diagram for any $\pi\in \Out(\IT_n)$.

We are now ready for our first result.

\begin{lemma}\label{lem:balanced -> non-intersecting partial}
    If $(F,L)$ is a balanced spaced parenthesization of $n$ spaces, then there exists a unique non-intersecting partial arm-leg diagram $\tau$ of the grid $[n]\times[n]$ such that $(\arms_\tau,\legs_\tau)=(F,L)$. If $(F,L)$ has $k$ pairs of parentheses, then $\tau$ has $k$ entries. 
\end{lemma}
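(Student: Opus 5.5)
The plan is to build $\tau$ directly from the matching of parentheses in $(F,L)$ and then show that no other partial arm-leg diagram can produce $(F,L)$.

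\emph{Step 1: matching.} First I would match the parentheses of $(F,L)$ in the usual way. Read the symbols left to right, with the convention that `(' at space $f$ comes before space $f$ and `)' at space $\ell$ comes after space $\ell$. Pair each `)' with the most recent unmatched `('. Balancedness ($d_i\ge 1>0$) together with $|F|=|L|$ makes this a perfect matching of $F$ with $L$. Moreover each matched pair $(f,\ell)$ satisfies $f\le\ell$, since the only ordering issue is at a single space, where `(' precedes `)'. The pairs are nested or disjoint in the standard way: two pairs $(f,\ell)$ and $(f',\ell')$ with $f<f'$ never satisfy $f<f'\le \ell<\ell'$.

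\emph{Step 2: construction and non-intersection.} For each pair $(f,\ell)$ I place the point $(\ell,\, n-f+1)$. Since $f\le \ell$, we have $n-f+1\ge n-\ell+1$, so the point lies at or above the antidiagonal. Its arm is $f$ and its leg is $\ell$. Distinct pairs have distinct $f$ and distinct $\ell$, so no two points share a row or column. There are $k$ points, and $(\arms_\tau,\legs_\tau)=(F,L)$.

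For non-intersection, translate the forbidden pattern $n-j+1<n-i+1\le\tau_j<\tau_i$, with $i<j$, into arms and legs. Write $f_i=n-\tau_i+1$. The pattern becomes $f_i<f_j\le i<j$, which is exactly a crossing of the pairs $(f_i,i)$ and $(f_j,j)$. Here $f_j\le i$ corresponds to the `(' before space $f_j$ occurring before the `)' after space $i$. The matching rule forbids this, because the `)' at $i$ would have had to match the more recent `(' at $f_j$ (or a later one), not the one at $f_i$.

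\emph{Step 3: uniqueness (main obstacle).} This is the delicate step. Given any non-intersecting $\tau$ with $(\arms_\tau,\legs_\tau)=(F,L)$, its points induce some bijection $F\to L$, $f\mapsto \ell$, with $f\le\ell$ for every pair, and with no crossing $f<f'\le\ell<\ell'$ by the translation above.

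I would argue that such a non-crossing bijection must be the stack matching. Suppose not, and induct on the closing parentheses from left to right. Take the first $\ell$ whose partner $f$ in $\tau$ is not the most recent still-unmatched opener $f'$, where $f'\le\ell$. Then $f<f'$. The partner $\ell'$ of $f'$ must satisfy $\ell'\ge f'$ and cannot be $\le \ell$: either it would have been processed earlier, contradicting minimality, or it equals $\ell$, which is impossible. Hence $\ell'>\ell$, and we get $f<f'\le\ell<\ell'$, a crossing, which is a contradiction.

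So the pairing is forced, and the point for each pair is determined as $(\ell,n-f+1)$. Therefore $\tau$ is unique and has exactly $k$ entries.
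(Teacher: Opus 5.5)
Your proposal is correct and follows the paper's proof. Both take the standard Catalan matching of $(F,L)$, place the point $(\ell,n-f+1)$ for each matched pair, and rule out intersections by translating the forbidden pattern into a crossing $f_i<f_j\le \ell_i<\ell_j$. The one real difference is that you actually justify uniqueness: any admissible $\tau$ induces a non-crossing bijection $F\to L$ with $f\le \ell$, and your minimal-counterexample argument forces this to be the stack matching. The paper only asserts uniqueness, implicitly relying on its unproved claim that the matching properties characterize balanced spaced parenthesizations.
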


In order to prove \Cref{lem:balanced -> non-intersecting partial}, we must first be able to extract the matching pairs of parentheses from a balanced spaced parenthesization.
To this end, we introduce an equivalent way to describe balanced spaced parenthesizations which allows us to keep track of some additional features of these combinatorial objects.

\begin{definition}\label{lem:matching algorithm}
    Let $(F,L)$ be a balanced spaced parenthesization of $n$ spaces with $|F|=|L|=k$.
    We assign a labeling to the $k$ pairs of matching parenthesis, which we denote by $\{(f_i,\ell_i)\}_{i\in[k]}$, and the values are determined as follows. For for each $i\in[k]$, consider the $i$-th pair of matching parenthesis:
    \begin{itemize}
        \item The value $f_i$ is equal to the number of the space immediately after the $i$-th opening parenthesis, and 
        \item the value $\ell_i$ is equal to the number of the space immediately before the $i$-th closing parenthesis (using the standard Catalan parenthesization \cite{catalan_ex}). 
    \end{itemize}
\end{definition}

From \Cref{ex:balanced ex}, we had the following balanced spaced parenthesization:

\[\begin{smallmatrix}(\_&\_&(\_&\_&(\_)&\_)&\_)\\\,1&2&\,\,3&4&\,5\,&6\,\,&7\,\end{smallmatrix}.\]
Using \Cref{lem:matching algorithm}, we have the following $(f_i, \ell_i)$ pairs: $(1,7)$, $(3,6)$ and $(5,5)$.

\begin{lemma}\label{lem:matching_pairs_new}
    \Cref{lem:matching algorithm} uniquely determines $(F,L)$, and the labeled pairs  satisfy:
    \begin{itemize}
        \item $F=\{f_i\}_{i\in[k]}$ and $L=\{\ell_i\}_{i\in[k]}$
        \item $f_i\le \ell_i$ (pairs open and then close) for all $i\in[k]$, and
        \item we avoid the pattern $f_i<f_j\le \ell_i<\ell_j$ (i.e., the matching pairs nest).
    \end{itemize}
    If needed, we may assume that $f_i<f_j$ whenever $i<j$ to provide a canonical indexing of the matching pairs. Under this assumption, these properties give an equivalent definition of a balanced spaced parenthesization.
\end{lemma}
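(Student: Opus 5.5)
Here I make the matching explicit. I translate the spaced parenthesization $(F,L)$ into an ordinary word $w$ in the letters `(' and `)' by scanning spaces $1,\dots,n$ in order and writing, at space $s$, first `(' if $s\in F$ and then `)' if $s\in L$. By \Cref{def:bsp_n} the prefix counts of $w$ satisfy: just before the `(' at space $s$ (if any), the running sum is $|F\cap[1,s-1]|-|L\cap[1,s-1]|\ge d_{s-1}-1\ge 0$. Just after the `)' at space $s$, the running sum is $d_s-1\ge 0$. Together with $|F|=|L|$, this shows that $w$ is a balanced (Dyck) word. Standard Catalan matching then pairs each `(' with a unique `)'. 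I label the pairs $(f_i,\ell_i)$ by the spaces they sit at, ordered so that $f_1<f_2<\cdots<f_k$; this is possible since each space carries at most one `('. This gives the first bullet immediately.

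For the second bullet, in a Dyck word each `(' precedes its matching `)'. If $f_i>\ell_i$, the `)' at space $\ell_i$ would occur strictly earlier in $w$ than the `(' at space $f_i$, which is impossible. If $f_i=\ell_i$, the order within a space is `(' then `)', which is consistent. So $f_i\le\ell_i$.

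For the third bullet, I use the non-crossing property of Dyck matchings. If $f_i<f_j\le\ell_i<\ell_j$, then in $w$ the four letters appear in the order `(' of $i$, `(' of $j$, `)' of $i$, `)' of $j$. The position of the `(' of $j$ before the `)' of $i$ is guaranteed even when $f_j=\ell_i$, by the within-space ordering. This is a crossing, contradicting that standard matching is nested.

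The converse, that these properties give an equivalent definition, is the step needing the most care. Given pairs with $f_i$ distinct, $\ell_i$ distinct, $f_i\le\ell_i$ and nesting, set $F=\{f_i\}$ and $L=\{\ell_i\}$. Then $|F|=|L|$ is automatic. By the argument of \Cref{lem:d_i = leg intersections}, $d_s$ equals the number of pairs with $f_i\le s\le\ell_i$, so $d_s\ge 0$. Positivity of every $d_s$ cannot come from these three properties alone, so I would add the covering condition that every space lies in some $[f_i,\ell_i]$, which is exactly $d_s>0$. Finally I check that standard matching on the resulting word recovers the given pairs. The key fact is that a non-crossing perfect matching of a Dyck word's parentheses, with each `(' before its `)', is unique: the innermost-first (stack) matching is forced by nesting. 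I verify this by induction, removing a pair $(f_i,\ell_i)$ with no $f_j$ in $(f_i,\ell_i]$ other than itself; such a pair is adjacent in $w$. This establishes uniqueness and the bijective equivalence.
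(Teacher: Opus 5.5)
Your proposal is correct and takes the same route as the paper, whose proof is only an appeal to the standard Catalan (Dyck-word) matching. Your interleaved word $w$, with the opening parenthesis placed before the closing one inside a space, together with your prefix-sum check, makes that appeal rigorous. You are also right that the converse needs your extra covering condition, because the three bullets only force $d_s\ge 0$: the pairs $(1,2),(4,4)$ satisfy all three bullets yet give $(\{1,4\},\{2,4\})$, which has $d_3=0$ and which the paper itself lists as not balanced.
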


\begin{proof}
    The three points above are equivalent to the standard Catalan parenthesization definition.
\end{proof}

\begin{figure}[H]
    \centering
    \includegraphics[width=0.75\linewidth]{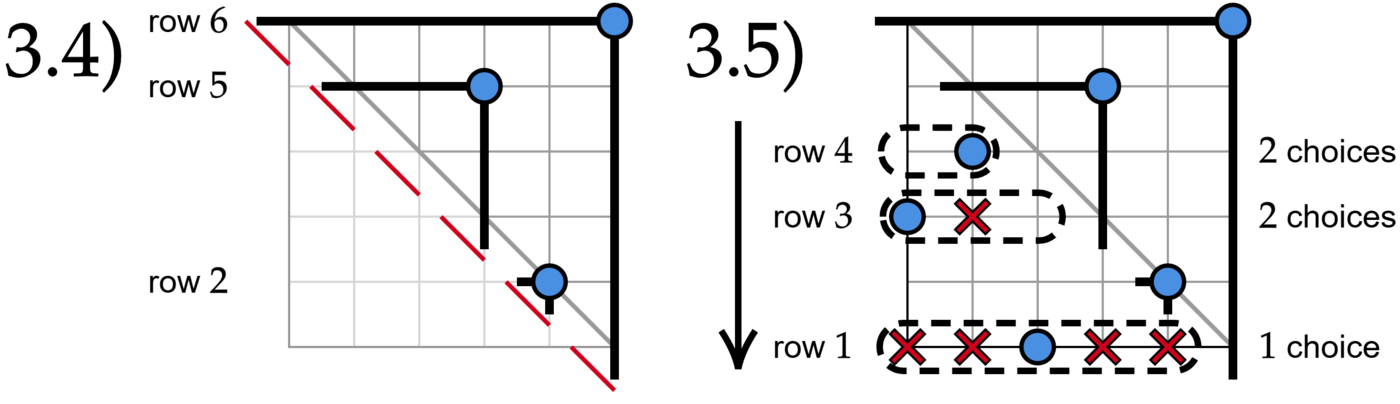}
    \caption{In \Cref{lem:balanced -> non-intersecting partial} (left), we place peaks in rows $i$ satisfying $n-i+1\in F$. In \Cref{lem:Phi is surjective} (right), we place the remaining entries below the antidiagonal. Here, there are four possible outcomes of Lehmer parking functions with the given parenthesization.}
    \label{fig:algorithm for below antidiagonal}
\end{figure}

We now convert matching pairs of labeled parentheses into entries in a partial arm-leg diagram, this in turn establishes \Cref{lem:balanced -> non-intersecting partial}.

\begin{proof}[Proof of \Cref{lem:balanced -> non-intersecting partial}]
    From $(F,L)$ we obtain the matching pairs $\{(f_i,\ell_i)\}_{i\in[k]}$ as in \Cref{lem:matching algorithm}.
    For each pair $(f_i,\ell_i)$, we then add the entry $(\ell_i,n-f_i+1)$ to a partial arm-leg diagram $\tau$. 
    Henceforth, we set $\tau_{\ell_i}=n-f_i+1$.
    We now verify that $\tau$ is the unique non-intersecting partial arm-leg diagram such that $(\arms_\tau,\legs_\tau)=(F,L)$:
    \begin{itemize}
        \item Each entry lies at or above the antidiagonal because $f_i\le \ell_i$ implies $\tau_{\ell_i}=n-f_i+1\ge n-\ell_i+1$.
        \item We have $\legs_\tau=\{i\,:\,(i,\tau_i)\text{ is at or above the antidiagonal}\}=\{\ell_i\}_{i\in[k]}=L$. Similarly, $\arms_\tau=F$.
        \item No row or column contains more than one entry since $F=\{f_i\}_{i\in[k]}$ and $L=\{\ell_i\}_{i\in[k]}$ have $k$ distinct elements.
        \item $\tau$ is non-intersecting because otherwise, by \Cref{thm: characterization of inversion table outcomes}, $\tau$ contains the pattern
        \[n-\ell_j+1<n-\ell_i+1\le \tau_{\ell_j}<\tau_{\ell_i}\] for some $\ell_i,\ell_j\in[n]$. 
        Replacing $\tau_{\ell_i}=n-f_i+1$ and $\tau_{\ell_j}=n-f_j+1$ in the set of inequalities yields:
        \[n-\ell_j+1<n-\ell_i+1\le n-f_j+1<n-f_i+1,\] which implies
        $f_i<f_j\le \ell_i<\ell_j$
        for some $f_i,f_j$. But this pattern is avoided by \Cref{lem:matching_pairs_new}.
    \end{itemize}
    This partial arm-leg diagram is unique.
\end{proof}

Thus, a balanced spaced parenthesization $(F,L)$ can be assigned a unique set of non-intersecting peaks of a partial arm-leg diagram $\tau$ with $(\arms_\tau,\legs_\tau)=(F,L)$. 
Next we show in \Cref{lem:Phi is surjective} that there exists an outcome of a Lehmer parking function, $\pi$, with peaks given by $\tau$, finishing the proof that the map $\Phi$ defined in \Cref{eq:Phi} is surjective. We emphasize that such $\pi$ only exists because $\tau$ comes from a balanced spaced parenthesization; if $\tau$ is the empty partial arm-leg diagram, for example, then $\tau$ is not the set of peaks of an arm-leg diagram of any permutation.

\begin{lemma}\label{lem:Phi is surjective}
    If $(F,L)$ is a balanced spaced parenthesization, then there exists at least one 
    $\pi \in \Out(\IT_n)$ such that $(F,L)=(\arms_\pi,\legs_\pi)$.
\end{lemma}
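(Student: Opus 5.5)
Starting from $(F,L)$, apply \Cref{lem:balanced -> non-intersecting partial} to obtain the unique non-intersecting partial arm-leg diagram $\tau$ with $(\arms_\tau,\legs_\tau)=(F,L)$ and $k=|F|$ entries. The plan is to extend $\tau$ to a full permutation $\pi$ by placing the remaining $n-k$ entries strictly below the antidiagonal, in the columns $[n]\setminus L$ and rows $\{n-f+1: f\in [n]\setminus F\}$. Two things must hold at the end. First, the entries of $\pi$ at or above the antidiagonal must be exactly those of $\tau$; then $(\arms_\pi,\legs_\pi)=(F,L)$. Second, $\pi$ must be non-intersecting. The second condition follows from the first: the arm-leg diagram depends only on the peaks, so it coincides with that of $\tau$ and is non-intersecting. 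Then \Cref{thm: characterization of inversion table outcomes} gives $\pi\in\Out(\IT_n)$.

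So the whole matter reduces to a bipartite matching problem. We must match the free columns $C=[n]\setminus L$ to the free rows $R=\{n-f+1: f\notin F\}$ so that each placed entry $(c,r)$ satisfies $r<n-c+1$, i.e. $c+r\le n$. I would do this greedily. Process the free columns in increasing order $c_1<c_2<\cdots<c_{n-k}$, and assign to each the largest available free row strictly below the antidiagonal in that column. Equivalently, list the free rows in decreasing order $r_1>r_2>\cdots$ and set $\pi_{c_j}=r_j$. It then suffices to check $r_j\le n-c_j$ for every $j$. Writing $r_j=n-g_j+1$ with $g_1<g_2<\cdots$ the elements of $[n]\setminus F$ in increasing order, the condition becomes $g_j>c_j$. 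That is, the $j$-th space not carrying an opening parenthesis must come strictly after the $j$-th space not carrying a closing parenthesis.

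This inequality is the main obstacle, and I would derive it from balancedness. Suppose $g_j\le c_j$. Consider the space $m=g_j$. In $[1,m]$ there are exactly $j$ non-$F$ spaces, so $|F\cap[1,m]|=m-j$. In $[1,m-1]$ there are at least $j-1$ non-$L$ spaces... The sharper form of the argument uses space $c_j$ instead. Since $c_j\ge g_j$, the interval $[1,c_j]$ contains at least $j$ non-$F$ spaces, so $|F\cap[1,c_j]|\le c_j-j$. It contains exactly $j$ non-$L$ spaces, so $|L\cap[1,c_j]|=c_j-j$. Hence the depth just after space $c_j$, namely $|F\cap[1,c_j]|-|L\cap[1,c_j]|$, is $\le 0$. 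If $c_j<n$, this forces $d_{c_j+1}\le 0$ unless $c_j+1\in F$. So I would refine the bound by counting up to $c_j+1$. The cleaner route is to note that $c_j\notin L$, so $d_{c_j}=|F\cap[1,c_j]|-|L\cap[1,c_j-1]|=|F\cap[1,c_j]|-|L\cap[1,c_j]|\le 0$, contradicting $d_{c_j}>0$ from \Cref{def:bsp_n}. Hence $g_j>c_j$, and all placed entries lie strictly below the antidiagonal.

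Finally, I would verify that $\pi$ is a permutation. Its columns are $L\sqcup C=[n]$, and its rows are $\{n-f+1:f\in F\}\sqcup R=[n]$, each used once. So $\pi$ is a permutation whose peaks are exactly $\tau$, completing the surjectivity of $\Phi$.
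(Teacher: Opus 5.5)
Your proof is correct and has the same skeleton as the paper's. You take $\tau$ from \Cref{lem:balanced -> non-intersecting partial}, fill the free rows and columns strictly below the antidiagonal so that the peaks stay exactly $\tau$, and conclude with \Cref{thm: characterization of inversion table outcomes}.

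The difference is in the key verification. The paper fills the empty rows from top to bottom. For row $n-i+1$ with $i\notin F$, it counts the empty columns in $[1,i-1]$ with a box argument (\Cref{fig:proof_3.6}) and identifies that number with $d_i$ through the box interpretation of \Cref{lem:d_i = leg intersections}. So at every step there are exactly $d_i$ admissible columns, whatever was chosen earlier.

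You instead commit to one explicit matching: the $j$-th free column $c_j$ goes with the $j$-th space $g_j\notin F$. You reduce its validity to the ballot-type inequality $c_j<g_j$, which you get from $d_{c_j}>0$ using only the defining formula for the depth and $c_j\notin L$. This existence proof is more self-contained, since it needs no box counting. It also produces a canonical preimage of $(F,L)$: it is exactly the paper's procedure with the smallest available column chosen at each step, i.e.\ all $g_i=1$.

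What your argument does not give, as written, is the exact number of choices per row. The paper reuses that number for the fiber formula and hence for $\GBSP_n$ and the bijection $\Phi'$. Your setup recovers it easily. Every earlier row $n-g_{j'}+1$ with $j'<j$ uses a column in $[1,g_{j'}-1]\subseteq[1,g_j-1]$, so the number of still-free columns in $[1,g_j-1]$ is $(g_j-1-|L\cap[1,g_j-1]|)-(j-1)=|F\cap[1,g_j]|-|L\cap[1,g_j-1]|=d_{g_j}$.

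Two presentational fixes. First, the greedy rule ``largest available free row'' agrees with $\pi_{c_j}=r_j$ only once $c_j<g_j$ is proved, so define $\pi_{c_j}=r_j$ directly. Second, delete the abandoned attempts via $m=g_j$ and via $c_j+1$.
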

\begin{proof}
    Given $(F,L)$, we start from the non-intersecting partial arm-leg diagram $\tau$ from \Cref{lem:balanced -> non-intersecting partial} satisfying $(\arms_\tau,\legs_\tau)=(F,L)$ (see \Cref{fig:algorithm for below antidiagonal}, left). Because $\arms_\tau=F$, we have that $\tau$ contains entries in each row $n-i+1$ such that $i\in F$.
    We now construct $\pi$ from $\tau$ as follows: we focus on the rows which do not contain an entry, and in each such row 
    beginning from top to bottom, iteratively place an entry $(j,n-i+1)$ below the antidiagonal, where $j\in[1,i-1]$ and $i\notin F$,
    (see \Cref{fig:algorithm for below antidiagonal}, right).
    We show that if $n-i+1$ is the uppermost empty row, 
    there exists at least one column numbered $j\in[1,i-1]$ which contains no entries of the diagram. This implies that we can place the point $(j,n-i+1)$ below the antidiagonal.
    Note that \Cref{lem:d_i = leg intersections} follows identically for $(\arms_\tau,\legs_\tau)$; that is, $d_i$ is given by the number of entries of $\tau$ in $[i,n]\times[n-i+1,n]$.
    We  illustrate the following equations in \Cref{fig:proof_3.6}, where the figure numbers correspond to the equation numbers below. In the caption we provide some additional details to explain the following count: 
    The number of empty columns $j\in[1,i-1]$ is given by 
    \begin{align}
        \label{eq:2}&(i-1)-\text{number of entries in }[1,i-1]\times[n-i+2,n]&\text{no lower entries below the antidiagonal}\\
        \label{eq:3}=\,&\text{number of entries in }[i,n]\times[n-i+2,n]&\text{the top $i-1$ rows have $i-1$ entries}\\
        \label{eq:4}=\,&\text{number of entries in }[i,n]\times[n-i+1,n]&\text{row $n-i+1$ is empty}\\
        =\,&d_i,&\text{by \Cref{lem:d_i = leg intersections} for partial diagrams}
    \end{align}
    which is positive because $(\arms_\tau,\legs_\tau)$ is balanced. 
    Placing entries $(j,n-i+1)$ in this manner for each remaining row yields an outcome of a Lehmer parking function, $\pi$, satisfying $(\arms_\pi,\legs_\pi)=(F,L)$.
\end{proof}

\begin{figure}[H]
    \centering
    \includegraphics[width=.9\linewidth]{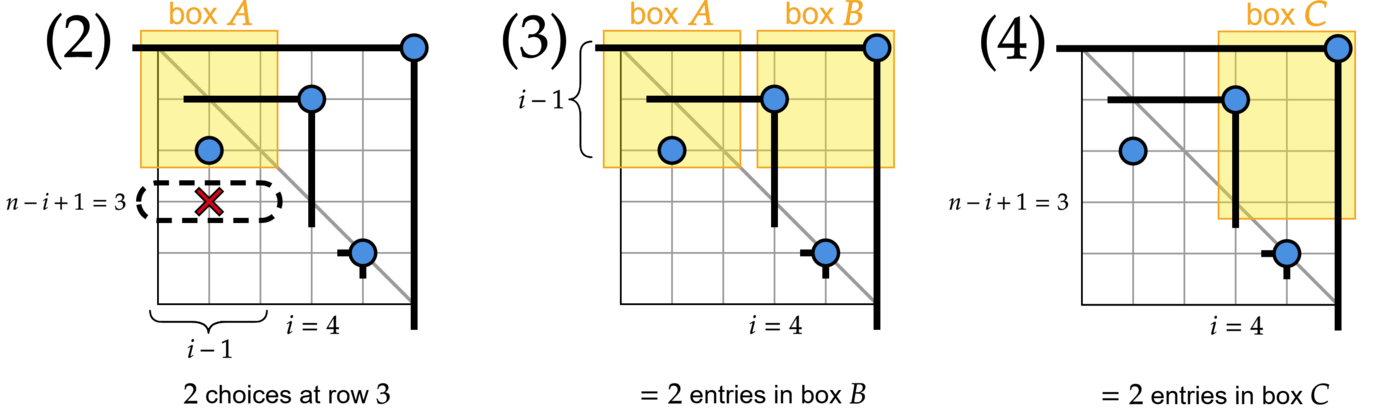}
    \caption{We assume that row $n-i+1$ is the uppermost empty row. There are $i-1$ entries to the left of the antidiagonal entry $(i,n-i+1)$, and $(i-1)-|\text{box }A|$ are available for the next entry (\cref{eq:2}). Likewise, there are $i-1$ entries in the top $i-1$ columns, and $(i-1)-|\text{box }A|$ of them lie in box $B$ (\cref{eq:3}). Finally, row $n-i+1$ is empty, so the same number of entries lie in box $C$ (\cref{eq:4}), the box given in \Cref{lem:d_i = leg intersections}.}
    \label{fig:proof_3.6}
\end{figure}

In terms of our function $\Phi$ from \Cref{eq:Phi}, \Cref{lem:Phi is surjective} tells us that $\Phi$ is a well-defined surjective function. 
While \Cref{lem:Phi is surjective} proves existence of an outcome with the given properties, the same construction given in the proof of \Cref{lem:Phi is surjective} actually gives a count for all such outcomes.
We establish this next. 

\begin{corollary}
    If $(F,L)$ is a balanced spaced parenthesization, then the cardinality of the fiber is
    \[|\Phi^{-1}((F,L))|=\displaystyle\prod_{n-i+1\notin F}d_i.\]
\end{corollary}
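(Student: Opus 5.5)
The plan is to show that every $\pi$ in the fiber is produced by the row-by-row construction in the proof of \Cref{lem:Phi is surjective}, and then count the choices made at each row.

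\textbf{Step 1: the peaks of $\pi$ are forced.} Fix $\pi\in\Phi^{-1}((F,L))$. Its set of peaks $\tau$ is a non-intersecting partial arm-leg diagram with $(\arms_\tau,\legs_\tau)=(F,L)$. By the uniqueness in \Cref{lem:balanced -> non-intersecting partial}, $\tau$ is the diagram built from the matching pairs. So every $\pi$ in the fiber agrees with $\tau$ at or above the antidiagonal.

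\textbf{Step 2: the remaining entries are constrained.} Every other entry of $\pi$ lies strictly below the antidiagonal. These entries fill exactly the rows $n-i+1$ with $i\notin F$. The entry in such a row is $(j,n-i+1)$ for some column $j\le i-1$ not used by $\tau$ or by the other lower entries.

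Conversely, suppose we complete $\tau$ to a permutation using only entries strictly below the antidiagonal. Then the peaks of the result are exactly $\tau$. It is non-intersecting because $\tau$ is, and non-intersection only involves peaks. By \Cref{thm: characterization of inversion table outcomes}, the result lies in $\Out(\IT_n)$ and maps to $(F,L)$. So the fiber is in bijection with the ways to complete $\tau$ to a permutation using lower entries.

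\textbf{Step 3: count row by row.} Process the empty rows from top to bottom, and consider the step at which row $n-i+1$ is filled. At that moment, the rows $n-i+2,\dots,n$ are already filled: each either lies in $\tau$ or was handled earlier. Row $n-i+1$ itself and all rows below it contain no lower entries yet.

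This is exactly the situation analysed in the proof of \Cref{lem:Phi is surjective}. Rows below $n-i+1$ may contain peaks of $\tau$, but such peaks lie in columns $\ge i$, so they do not affect columns $1,\dots,i-1$. The chain of equalities from that proof then shows that the number of free columns in $[1,i-1]$ is $d_i$, independently of the earlier choices. This uniformity gives the product $\prod_{i\notin F}d_i$ by the multiplication principle. Note that the row index $n-i+1\notin F$ in the statement should be read as the row label, that is, $i\notin F$.

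The main obstacle is checking the equality chain of \Cref{lem:Phi is surjective} at a row that is not the uppermost empty row of the original $\tau$. There are two things to verify. First, the top $i-1$ rows each contain exactly one entry. Second, lower entries placed in those rows occupy only columns $<i$. Both hold by construction.

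Finally, filling rows in a different order cannot produce extra permutations. Any completion is determined by its set of entries, so it is counted exactly once in the top-to-bottom sequential process.
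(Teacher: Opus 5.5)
Your proof is correct and takes the same route as the paper. The fiber is parametrized by the top-to-bottom choices in the construction of \Cref{lem:Phi is surjective}, where each peak-free row $n-i+1$ offers exactly $d_i$ free columns regardless of earlier choices; your extra checks make explicit what the paper's short argument leaves implicit (peaks forced by uniqueness, every lower completion lying in the fiber, and the equality chain holding at every step), and you are right that the product must run over $i\notin F$: for $n=4$ and $(F,L)=(\{1,3\},\{3,4\})$ the fiber is $\{3124\}$, matching $d_2d_4=1$, whereas the literal indexing would give $d_1d_3=2$.
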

\begin{proof}
    In the proof of \Cref{lem:Phi is surjective}, the number $d_i$ of choices for the entry $(j,n-i+1)$ in each row $n-i+1$ such that $n-i+1\notin F$ is independent of the previous choices, and we can choose any of the $d_i$ available columns at each step. In particular, we can parameterize the exact set of $\pi\in\Phi^{-1}((F,L))$ by specifying an integer $g_i\in[1,d_i]$ for each row $n-i+1$ such that $i\notin F$ directing the $g_i$-th smallest choice to be made for $(j,n-i+1)$.
\end{proof}

For example, in \Cref{fig:algorithm for below antidiagonal}, we make the second smallest choice for the third row from the top, so here $g_3=2$. Similarly, we have $g_4=1$ and $g_6=1$. We can display these digits in the parenthesization in spaces without opening parentheses: here $(\_\ (\_\ \underline{2}\ \underline{1})(\_)\ \underline{1})$ corresponds to $\pi=314526$. 
This allows us to introduce the following.

\begin{definition}\label{def:gBSP}
    A $g$-\textit{balanced spaced parenthesization} $(F,L,g)$ of length $n$ consists of two subsets $F,L\subseteq [n]$ of equal size, $F$ indexing opening parentheses and $L$ indexing closing parentheses, and a tuple $g=(g_i)_{n-i+1\notin F}$ such that $d_i=|F\cap[1,i]|-|L\cap[1,i-1]|$ is positive and $g_i\in[1,d_i]$ for all $i\in[n]$ such that $n-i+1\notin F$. We let $\GBSP_n$ denote the set of $g$-balanced spaced parenthesizations of length $n$, and we let $\Phi':\Out(\IT_n)\to\GBSP_n$ denote the function taking an outcome of a Lehmer parking function to the particular $g$-balanced spaced parenthesization corresponding to it.
\end{definition}

Next, we show that the map $\Phi'$ is in fact a bijection.

\begin{lemma}\label{lem:ITO <-> BSP}
    The set of all outcomes of Lehmer parking functions of length $n$ are in bijection with $g$-balanced spaced parenthesizations of $n$ spaces.
\end{lemma}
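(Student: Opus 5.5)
The plan is to show that $\Phi'$ is well defined, injective, and surjective, by recording for each $\pi\in\Out(\IT_n)$ exactly the choices made in the construction in the proof of \Cref{lem:Phi is surjective}.

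\textbf{Well-definedness.} Given $\pi$, set $(F,L)=\Phi(\pi)=(\arms_\pi,\legs_\pi)$; this lies in $\BSP_n$ by \Cref{lem:bsp connection}. The peaks of $\pi$ form a partial arm-leg diagram $\tau$ with $(\arms_\tau,\legs_\tau)=(F,L)$. By the uniqueness part of \Cref{lem:balanced -> non-intersecting partial}, $\tau$ is exactly the diagram constructed from $(F,L)$.

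The remaining entries of $\pi$ lie strictly below the antidiagonal, and they occupy precisely the rows $n-i+1$ with $n-i+1\notin F$, each at some column $j\le i-1$. Process these rows from top to bottom, as in the proof of \Cref{lem:Phi is surjective}. When row $n-i+1$ is reached, the displayed count (equations \eqref{eq:2}--\eqref{eq:4}) shows that the set $E_i$ of columns in $[1,i-1]$ not yet occupied by entries in higher rows has exactly $d_i$ elements. This count uses only that the rows above are filled, which holds for $\pi$ at that stage. Since $\pi$'s entry in row $n-i+1$ lies in some column of $[1,i-1]$ that is still unoccupied, it is an element of $E_i$. Define $g_i\in[1,d_i]$ to be its rank in $E_i$. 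Then $\Phi'(\pi)=(F,L,g)$ lies in $\GBSP_n$.

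\textbf{Injectivity.} Suppose $\Phi'(\pi)=\Phi'(\pi')$. Both have the same peaks $\tau$, by the uniqueness in \Cref{lem:balanced -> non-intersecting partial}. Induct down the non-peak rows. If $\pi$ and $\pi'$ agree on all rows above row $n-i+1$, then the sets $E_i$ coincide. Equal ranks $g_i$ then force equal columns in row $n-i+1$, so $\pi=\pi'$.

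\textbf{Surjectivity.} Given $(F,L,g)\in\GBSP_n$, build $\tau$ from $(F,L)$ and fill the non-peak rows top-down, placing the entry in row $n-i+1$ in the $g_i$-th smallest column of $E_i$. This is possible since $|E_i|=d_i\ge g_i$. The result $\pi$ is a permutation whose entries off $\tau$ are all strictly below the antidiagonal. Its peaks are therefore exactly $\tau$, so the arm-leg diagram of $\pi$ coincides with that of $\tau$ and is non-intersecting. By \Cref{thm: characterization of inversion table outcomes}, $\pi\in\Out(\IT_n)$, and by construction $\Phi'(\pi)=(F,L,g)$.

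\textbf{Main obstacle.} The delicate point is checking that the count $|E_i|=d_i$ is valid at every stage, not just for the uppermost empty row of $\tau$. The argument in \Cref{lem:Phi is surjective} only needs that rows $n-i+2,\dots,n$ already contain one entry each and that row $n-i+1$ is empty. In the top-down procedure this holds at every step, so the same count applies verbatim. One must also confirm that the final rows fill the remaining columns, which holds simply because the numbers of rows and columns match.
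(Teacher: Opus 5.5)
Your proof is correct and follows the paper's route: $\Phi'$ records, for each non-peak row filled top-down as in \Cref{lem:Phi is surjective}, the rank $g_i$ of its column among the $d_i$ free ones. It is also tighter than the paper's write-up, since your row-by-row induction is a sound injectivity argument, whereas the paper's ``first differing column'' step can point to the wrong coordinate of $g$ (for $\sigma=1234$ and $\sigma'=2134$, which share $(F,L)=(\{1,2\},\{3,4\})$, it points to $g_4$, but they differ only in $g_3$), and you explicitly realize every $g$ instead of deducing surjectivity of $\Phi'$ from that of $\Phi$.

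The one fix needed is an index slip that also appears in \Cref{def:gBSP}: row $n-i+1$ carries a peak iff $i\in F$, so the non-peak rows are the rows $n-i+1$ with $i\notin F$ (not $n-i+1\notin F$), and $g$ must be indexed by $i\notin F$, as in the paper's example where $F=\{1,2,5\}$ carries $g_3,g_4,g_6$; with the index set $\{i: n-i+1\notin F\}$ the total over $\BSP_4$ would be $18$ rather than $B_4=15$.
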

\begin{proof}
In \Cref{lem:Phi is surjective}, we established that $\Phi$ is surjective, and $\Phi'$ inherits this property. Therefore, we need to establish that $\Phi'$ is also injective.
Suppose that $\Phi'(\sigma)=\Phi'(\sigma')$, with $\sigma\neq \sigma'$.
Then, let
$i$ be the smallest index at which $\sigma_i\neq \sigma_i'$.
If $(i,\sigma_i)$ is above the antidiagonal, then the two permutations must have different arm-leg sets.
This implies $(\arms_\sigma,\legs_\sigma)$ and $(\arms_{\sigma'},\legs_{\sigma'})$ are distinct, contradicting that $\Phi'(\sigma)=\Phi'(\sigma')$.
In the case that $(i,\sigma_i)$ is below the antidiagonal, then the element $g_{n-\sigma_i+1}$ in $g$, for $\sigma$, differs from  $g_{n-\sigma_i+1}'$ in $g'$, for $\sigma'$, again contradicting that 
$\Phi'(\sigma)=\Phi'(\sigma')$.
Therefore, $\Phi'$ is injective.
This shows that $\Phi'$ is a bijection.
\end{proof}

\subsection{Set Partitions and Balanced Spaced Parenthesizations}\label{sec:setpartition}
A set partition of $[n]$ is a set of nonempty disjoint subsets, or \textit{blocks}, whose union is $[n]$. For example, $\{\{1,4\},\{5\},\{2,3,6\}\}$ is a set partition of $[6]$ with $|\mB|=3$ blocks. 
Given a set partition $\mB$, we define $\minB$ as the set of minima of the blocks of $\mB$ and $\maxB$ as the set of maxima of the blocks of $\mB$. 
For the above set partition $\mB$, we have $\minB=\{1,2,5\}$ and $\maxB=\{4,5,6\}$. 
We let $\Pi_n$ denote the set of set partitions of $[n]$.

\begin{lemma}\label{lem:set partitions are balanced}
    If $\mB$ is a set partition of $[n]$, then the spaced parenthesization $(\minB,\maxB)$ is balanced.
\end{lemma}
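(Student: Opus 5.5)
We have to check three things for $(\minB,\maxB)$: it is a spaced parenthesization, i.e.\ $\minB$ and $\maxB$ are subsets of $[n]$ of equal size; and its depth $d_i$ is positive at every space $i\in[n]$.

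\emph{Equal size.} Each block of $\mB$ has exactly one minimum and one maximum. Distinct blocks are disjoint, so their minima are distinct, and likewise their maxima. Hence $|\minB|=|\maxB|=|\mB|$.

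\emph{Positive depth.} We interpret $d_i$ through blocks, in the spirit of \Cref{lem:d_i = leg intersections}. For a block $B$, write $m_B=\min B$ and $M_B=\max B$, so $m_B\le M_B$. Then
\[
d_i=|\minB\cap[1,i]|-|\maxB\cap[1,i-1]|=\sum_{B\in\mB}\Bigl(\mathbf 1[m_B\le i]-\mathbf 1[M_B\le i-1]\Bigr).
\]
Whenever $M_B\le i-1$ we also have $m_B\le i$, so each summand is $0$ or $1$. A summand equals $1$ exactly when $m_B\le i\le M_B$. Thus $d_i$ counts the blocks whose span $[m_B,M_B]$ contains $i$.

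Now fix $i\in[n]$. Since $\mB$ covers $[n]$, the element $i$ lies in some block $B$. For that block, $m_B\le i\le M_B$, so $d_i\ge1>0$.

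This completes the argument. The only step needing care is rewriting the counts in the definition of $d_i$ as a sum over blocks and checking that each term lies in $\{0,1\}$. After that, positivity follows because every $i$ belongs to some block.

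This argument does not need the nesting property from \Cref{lem:matching_pairs_new}. Spans of blocks may cross, but the definition of balanced in \Cref{def:bsp_n} only asks that each $d_i$ be positive, which holds regardless.
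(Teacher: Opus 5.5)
Your proof is correct and takes essentially the same route as the paper. Both arguments account for $d_i$ block by block: the paper notes that each block with maximum in $\maxB\cap[1,i-1]$ has its minimum in $\minB\cap[1,i]$, while you write $d_i$ as a sum of $0/1$ contributions counting the block spans that contain $i$. Both then get strict positivity from the block containing $i$, and your explicit check that $|\minB|=|\maxB|$ (minima and maxima of disjoint blocks are distinct) fills in a detail the paper leaves implicit.
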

\begin{proof}
    For $i\in[n]$ we show that $d_i=|\minB\cap[1,i]|-|\maxB\cap[1,i-1]|$ is positive. It is nonnegative because every block with maximum in $\maxB\cap[1,i-1]$ has minimum in $\minB\cap[1,i]$. It is positive because the block containing $i$ has minimum in $\minB\cap[1,i]$ and maximum not in $\maxB\cap[1,i-1]$.
\end{proof}

Consider the map 
\begin{align}
\Psi:\Pi_n\to\BSP_n, &\qquad \mbox{defined by }\mB\mapsto(\minB,\maxB).
\label{def:Psi}
\end{align}
\Cref{lem:set partitions are balanced} establishes that $\Psi$ is well-defined. 
To establish the surjectivity of $\Psi$ we recall \Cref{lem:matching algorithm} for the balanced spaced parenthesizations. 

\begin{lemma}\label{lem:BSP to SP}
    If $(F,L)$ is a balanced spaced parenthesization in $\BSP_n$, then there exists a set partition $\mB\in\Pi_n$ such that $(F,L)=(\minB,\maxB)$.
\end{lemma}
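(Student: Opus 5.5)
We construct the set partition explicitly from the matching pairs. Starting from $(F,L)$, we apply \Cref{lem:matching algorithm} to obtain the matching pairs $\{(f_t,\ell_t)\}_{t\in[k]}$, which by \Cref{lem:matching_pairs_new} satisfy $F=\{f_t\}$, $L=\{\ell_t\}$, $f_t\le \ell_t$, and the nesting condition. Each pair $(f_t,\ell_t)$ will seed one block $B_t$, initialized as $\{f_t,\ell_t\}$ (a singleton when $f_t=\ell_t$). Because $F$ and $L$ each consist of $k$ distinct elements, these initial sets already have the prescribed minima and maxima, provided they end up disjoint. Disjointness needs one check: if $f_t=\ell_s$ with $s\neq t$, then $f_s<f_t=\ell_s<\ell_t$ (or the symmetric situation), which is the forbidden crossing pattern. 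So the seeds are pairwise disjoint. Note also that an element may lie in both $F$ and $L$ only as part of a single pair.

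Next we distribute the remaining elements $i\in[n]\setminus(F\cup L)$. For such an $i$, we set $d_i=|F\cap[1,i]|-|L\cap[1,i-1]|$. As noted in the discussion preceding \Cref{lem:d_i = leg intersections}, $d_i$ counts the matching pairs with $f_t\le i\le \ell_t$; since $i\notin F\cup L$, these pairs satisfy $f_t<i<\ell_t$. Balance gives $d_i\ge 1$, so at least one such pair exists. We place $i$ into the block $B_t$ of one such pair. For definiteness, we choose the innermost one, i.e.\ the one with largest $f_t$; any choice would work for existence.

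Finally, we verify that $\mB=\{B_t\}$ has the required properties. The blocks are nonempty, pairwise disjoint, and cover $[n]$. Every added element lies strictly between $f_t$ and $\ell_t$, so $\min B_t=f_t$ and $\max B_t=\ell_t$. Hence $\minB=F$ and $\maxB=L$.

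The main obstacle is justifying the identification of $d_i$ with the number of matching pairs strictly enclosing $i$. This rests on the standard Catalan matching: each closing parenthesis matches the most recent unmatched opening one. It follows that the opens in $[1,i]$ minus the closes in $[1,i-1]$ are exactly the pairs still open at space $i$. I would state this as a short counting argument using \Cref{lem:matching_pairs_new}.
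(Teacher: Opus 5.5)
Your proposal is correct and uses the same construction as the paper. You seed blocks $\{f_t,\ell_t\}$ from the matching pairs and insert each $i\notin F\cup L$ into a block whose pair strictly encloses it; such a block exists because $d_i\ge 1$. Your explicit disjointness check for the seeds (via the forbidden pattern $f_s<f_t\le \ell_s<\ell_t$) fills a step the paper leaves implicit, and the step you flag as the main obstacle is easier than you suggest: $d_i=\#\{t: f_t\le i\le \ell_t\}$ follows from $F=\{f_t\}$, $L=\{\ell_t\}$ and $f_t\le \ell_t$ alone, by the same count as in \Cref{lem:d_i = leg intersections}.
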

\begin{proof}
This proof is constructive.
We begin with a balanced spaced parenthesization, built as pairs $(f_i, \ell_i)$, as in \Cref{lem:matching algorithm}.
From each pair $(f_i,\ell_i)$ we construct a set $B_i=\{f_i,\ell_i\}$, where $f_i=\min B_i$ and $\ell_i=\max B_i$.
Note that we have now constructed $|F|=|L|=k$ blocks $B_1,B_2,\ldots, B_{k}$ each of size $1$ or $2$.

In the case that $F\cup L=[n]$, we are done. 
On the other hand, 
we need to include all remaining numbers into the blocks $B_1,B_2,\ldots,B_k$ iteratively, looking at the elements of the set $I=[n]\setminus(F\cup L)$ from smallest to largest. 

At some point in our process, we want to place $i\in I$ in one of the existing $k$ blocks. We first claim that there will always be at least one block $B_y$ such that $j\in [f_y+1, \ell_y-1]$.
From the balanced spaced parenthesization structure, we will have $d_i=|F\cap[1,i]|-|L\cap[1,i-1]|$ options. At spot $i$ (in the balanced spaced parenthesization), the number $d_i$ tells us specifically how many parenthesis pairs spot $i$ is contained in. 
Since these determine the minimums and maximums of the $k$ blocks, there are $d_i$ blocks with minimums less than $i$ and maximums greater than $i$.
\end{proof}

\begin{remark}
    Our process in \Cref{lem:BSP to SP} means that, in contrast to \Cref{sec:invPFoutcome}, we are not encountering a choice related to placing some of the elements $i$ such that $d_i=1$. Consider the example in the last section of $(\_\ (\_\ \underline{2}\ \underline{1})(\_)\ \underline{1})$ which can correspond to the set partition $\{\{1,4\},\{2,3,6\},\{5\}\}$. In this case, 
    the $d_3=2$ tells us we have two choices for placing the value $3$ in the set partition we are constructing, while we have already made the single choice available for placing the $4$ and $6$ in the set partition.
\end{remark}

The set partition construction presented in the proof of \Cref{lem:BSP to SP} along with the remark following it, help us arrive at the next result.

\begin{corollary}
    If $(F,L)$ is a balanced spaced parenthesization, then the cardinality of the fiber is 
    \[
    |\Psi^{-1}((F,L))| = \prod_{i\notin F}d_i.
    \]

\end{corollary}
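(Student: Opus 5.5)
The plan is to build every set partition $\mB$ with $(\minB,\maxB)=(F,L)$ by a sequential process, and to show that the choices made in this process are in bijection with the fiber $\Psi^{-1}((F,L))$. Note that the pairing of minima with maxima is \emph{not} fixed by the Catalan matching of \Cref{lem:matching algorithm}. For example, $F=\{1,2\}$, $L=\{3,4\}$ has fiber $\{\{1,3\},\{2,4\}\}$ and $\{\{1,4\},\{2,3\}\}$, and the formula gives $d_3d_4=2\cdot 1$. So, unlike the proof of \Cref{lem:BSP to SP}, I will not start from the blocks $\{f_i,\ell_i\}$. Instead I will scan $i=1,2,\ldots,n$ and maintain a collection of \emph{open} blocks: those whose minimum has been seen but whose maximum has not.

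The process runs as follows. When $i\in F$, start a new block $\{i\}$; it is open unless $i\in L$, in which case it is immediately closed as a singleton. When $i\notin F$, choose one of the currently open blocks and insert $i$ into it; if moreover $i\in L$, declare that block closed. The key count is that, just before step $i$ with $i\notin F$, the number of open blocks is
\[
|F\cap[1,i-1]|-|L\cap[1,i-1]|=|F\cap[1,i]|-|L\cap[1,i-1]|=d_i .
\]
This holds because each element of $F$ up to $i-1$ opened one block and each element of $L$ up to $i-1$ closed exactly one block, with a singleton $F\cap L$ element counting as both. Since $(F,L)$ is balanced, $d_i\ge 1$, so a choice always exists and there are exactly $d_i$ of them. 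Hence the process has exactly $\prod_{i\notin F}d_i$ possible runs.

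Next I verify that each run produces a set partition with $\minB=F$ and $\maxB=L$.
\begin{itemize}
\item Every element lands in exactly one block.
\item Each block's minimum is the element of $F$ that created it, since later insertions are larger.
\item A block receives no elements after it is closed, so its maximum is the closing element, which lies in $L$.
\item Every block does get closed, because $|F|=|L|$ and each element of $L$ closes exactly one block.
\item No element outside $L$ is a maximum, since such an element joins a block that stays open and later receives a larger element.
\end{itemize}

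Conversely, a partition $\mB$ in the fiber determines a run: at each $i\notin F$, the choice is the block of $\mB$ containing $i$. I must check this block is open at that time. Its minimum is less than $i$, and its maximum is at least $i$ and lies in $L$, so it has not yet been closed. The two maps are mutually inverse, because a run is determined by its choices, which are read off from the blocks.

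The main obstacle is bookkeeping rather than ideas. One must make sure that ``closing'' a chosen block when $i\in L\setminus F$ is consistent: that the closed block really ends up with maximum $i$, and that the open-block count equals $d_i$ exactly, including the treatment of elements of $F\cap L$ as singletons. Once that invariant is stated carefully, the product formula follows immediately.
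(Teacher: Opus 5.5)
Your proof is correct, and it takes a genuinely different (and more careful) route than the paper's.

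The paper's proof just points back to the construction in \Cref{lem:BSP to SP}. That construction starts from the Catalan-matched pairs $B_y=\{f_y,\ell_y\}$ as fixed skeleton blocks. It then inserts only the elements of $[n]\setminus(F\cup L)$, each into one of the $d_i$ skeleton blocks that straddle it. The corollary's proof nevertheless asserts $d_i$ independent choices for every $i\notin F$.

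As you noticed, that skeleton forces the min--max pairing to be the nested one. So the construction as written produces only $\prod_{i\notin F\cup L}d_i$ partitions. For $F=\{1,2\}$, $L=\{3,4\}$ it yields only $\{\{1,4\},\{2,3\}\}$. For the paper's own example $F=\{1,2,5\}$, $L=\{4,5,6\}$ it yields $d_3=2$ of the $d_3d_4d_6=4$ partitions in the fiber. Indeed, the partition $\{\{1,4\},\{2,3,6\},\{5\}\}$ quoted in the remark after \Cref{lem:BSP to SP} is not among them.

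Your scan with open blocks supplies exactly the missing freedom. An element of $L\setminus F$ also makes a choice, namely which open block it closes, and this is what realizes the non-nested pairings. Your invariant does the counting: exactly $|F\cap[1,i]|-|L\cap[1,i-1]|=d_i$ blocks are open before any step $i\notin F$, regardless of earlier choices, which gives the product. Your converse argument makes runs and fiber elements mutually inverse: the block of $\mB$ containing $i$ has minimum below $i$ and maximum in $L$ at least $i$, so it is still open at step $i$.

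The paper's framing has the appeal of mirroring the Lehmer side. There the peaks genuinely are forced by the nested matching, and all the freedom sits in the empty rows. Your approach, by contrast, actually counts the whole fiber. Moreover, ordering the open blocks by increasing minimum gives directly the digits $g_i\in[1,d_i]$ needed for the bijection $\Psi'$.
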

\begin{proof}
    In the proof of \Cref{lem:BSP to SP}, the number $d_i$ of choices of blocks for each $i\notin F$ is independent of the previous choices, and we can choose any of the $d_i$ valid blocks at each step. In particular, we can parameterize the exact set $\Psi^{-1}((F,L))$ of such $\mB$ by specifying an integer $g_i\in[1,d_i]$ for each $i\notin F$ directing $i$ to be placed in the valid block with the $g_i$-th smallest minimum.
\end{proof}

We are now ready to show that $\Psi'$ is also a bijection.

\begin{lemma}\label{lem:Pi <-> BSP}
    Set partitions of $[n]$ are in bijection with $g$-balanced spaced parenthesizations of $n$ spaces.
\end{lemma}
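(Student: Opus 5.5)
The plan is to copy the structure of \Cref{lem:ITO <-> BSP}. We first lift $\Psi$ from \Cref{def:Psi} to a map $\Psi':\Pi_n\to\GBSP_n$ by appending the digit tuple $g$ from the proof of the fiber corollary above. We then show that $\Psi'$ is well defined, surjective and injective.

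\emph{Definition of $\Psi'$.} Given $\mB\in\Pi_n$, set $(F,L)=(\minB,\maxB)$. For each $i\notin F$, let $g_i$ be the rank, ordered by increasing minimum, of the block containing $i$ among the \emph{valid} blocks at $i$. A valid block at $i$ is one whose matching pair $(f_y,\ell_y)$ from \Cref{lem:matching algorithm} contains space $i$. There are exactly $d_i$ such blocks, as argued in \Cref{lem:BSP to SP} through the interpretation of $d_i$ in \Cref{lem:d_i = leg intersections}. So $g_i\in[1,d_i]$, and balancedness is \Cref{lem:set partitions are balanced}; hence $\Psi'(\mB)\in\GBSP_n$.

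Before this, I would reconcile the indexing of $g$. In \Cref{def:gBSP} the digits are indexed by $n-i+1\notin F$. On the outcome side, however, the proof of \Cref{lem:Phi is surjective} shows that row $n-i+1$ is empty exactly when $i\notin F$. So the correct index set is $\{i: i\notin F\}$ on both sides, and I would read \Cref{def:gBSP} that way.

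\emph{Surjectivity.} Fix $(F,L,g)\in\GBSP_n$. By \Cref{lem:BSP to SP}, form the blocks $B_y\supseteq\{f_y,\ell_y\}$. Then process $i=1,\dots,n$ with $i\notin F$ in increasing order, putting $i$ into the valid block of $g_i$-th smallest minimum. Processing in increasing order keeps every block's minimum equal to $f_y$ and its maximum equal to $\ell_y$, because each inserted $i$ lies between them. So $\Psi'$ of the result is $(F,L,g)$. \emph{Injectivity.} Argue as in \Cref{lem:ITO <-> BSP}: two partitions with the same $(\minB,\maxB)$ have the same blocks' endpoints. If they differ, take the least element $i$ lying in different blocks. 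Then $i\notin F$ and its recorded $g_i$ differ, which is a contradiction.

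\emph{Expected obstacle.} The delicate step is elements $i\in L\setminus F$, the maxima of non-singleton blocks. Such an $i$ is forced into the block with $\ell_y=i$, but the pair $(f_y,\ell_y)$ and its enclosing pairs all contain space $i$. So for $g_i$ to carry exactly the right amount of information, I have to check which valid-block convention makes the number of choices at $i$ match $d_i$ exactly. If $i$ has only one legitimate choice while $d_i>1$, then $\Psi'$ is injective but not surjective, so this would have to be fixed, for example by restricting $g_i$ or by re-indexing which pairs count as open at $i$. I would settle this first, on small cases such as $n=3,4$, checking both against the outcome side and against the corollary $|\Psi^{-1}((F,L))|=\prod_{i\notin F}d_i$. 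Only then would I commit to the convention used in the surjectivity and injectivity arguments.
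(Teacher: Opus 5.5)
There is a genuine gap, and it sits exactly at the obstacle you flag but leave open. Your whole set-up treats the blocks of $\mB$ as indexed by the nested matching pairs $(f_y,\ell_y)$ of $(\minB,\maxB)$. In the injectivity step you state this outright: ``two partitions with the same $(\minB,\maxB)$ have the same blocks' endpoints.'' That is false. The partitions $\{\{1,3\},\{2,4\}\}$ and $\{\{1,4\},\{2,3\}\}$ both map to $(\{1,2\},\{3,4\})$, whose matching pairs are $(1,4),(2,3)$, yet the first partition has min--max pairs $(1,3),(2,4)$.

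This has several consequences:
\begin{itemize}
\item Your $\Psi'$ is undefined on every partition with crossing min--max intervals. In $\{\{1,3\},\{2,4\}\}$ neither block's min--max pair is a matching pair, so ``the valid block containing $i$'' has no meaning.
\item On the remaining partitions, $\Psi'$ forces $g_i=d_i$ for every $i\in L\setminus F$, because the pair closing at $i$ is the innermost pair containing $i$. So, for example, $(\{1,2\},\{3,4\},g)$ with $g_3=1$ is never attained.
\item Your surjectivity construction breaks at the same point: if $i=\ell_y$ and $g_i$ names another block, $i$ is placed twice.
\item Your injectivity argument relies on the false claim.
\end{itemize}
Your first suggested remedy, restricting $g_i$, is not available. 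It would change $\GBSP_n$, whereas on the outcome side (proof of \Cref{lem:Phi is surjective}) an empty row $n-i+1$ with $i\in L\setminus F$ genuinely offers $d_i$ columns. For $(\{1,2\},\{3,4\})$, both $1234$ and $2134$ occur as outcomes. Your second suggestion, changing which blocks count as open at $i$, is the right direction, but only once the nested skeleton is abandoned.

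The missing idea is to let $g$ itself decide which minimum each maximum is paired with. Scan $i=1,\dots,n$ and keep the set of open blocks, meaning those whose minimum is placed but whose maximum is not yet.
\begin{itemize}
\item If $i\in F$, open a new block $\{i\}$, closing it at once if also $i\in L$.
\item If $i\notin F$, put $i$ into the open block with the $g_i$-th smallest minimum, and close that block if $i\in L$.
\end{itemize}
Just before a step with $i\notin F$ there are exactly $|F\cap[1,i-1]|-|L\cap[1,i-1]|=d_i\geq 1$ open blocks. So every digit, including those at maxima, is a free choice in $[1,d_i]$. Since $|F|=|L|$, no block is left open at the end, and the minima and maxima produced are precisely $F$ and $L$.

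For the inverse, record $g_i$ as the rank, by minimum, of the block of $i$ among the blocks $B\in\mB$ with $\min B<i\le\max B$. An induction on $i$ shows the two maps are mutually inverse. Your ``least element lying in different blocks'' injectivity argument then works once blocks are identified by their minima.

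You inherited the nested skeleton from the written proof of \Cref{lem:BSP to SP}. However, the paper's fiber count $|\Psi^{-1}((F,L))|=\prod_{i\notin F}d_i$ and its Remark only make sense under this open-block rule. In the Remark, $g_3=2$, $g_4=1$, $g_6=1$ yield $\{\{1,4\},\{2,3,6\},\{5\}\}$: the maximum $4$ closes the block of $1$, even though the nested matching pairs $1$ with $6$.
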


\begin{proof}
In \Cref{lem:BSP to SP}, we established that $\Psi$ is surjective, and $\Psi'$ inherits this property. 
Therefore, we need to establish that $\Psi'$ is also injective.
Suppose that $\Psi'(\mathcal{B})=\Psi'(\mathcal{B}')$, with $\mathcal{B}\neq \mathcal{B}'$.
Thus, let
$i$ be the smallest element such that the subset containing $i$ in $\mathcal{B}$, denoted $B_j$,  is different from the subset containing $i$ in $ \mathcal{B}'$, denoted  $B_k'$.

One of the following must occur: either $B_j$ and $B_k'$ have different minimal or maximal elements, or some of the internal elements of the subsets are different. 

If the minimal or maximal elements differ, then the $(F,L)$ set for $\mathcal{B}$ is different from the $(F',L')$ set for $\mathcal{B}'$. This would make the balanced spaced parenthesizations distinct, contradicting the asusmption that $\Psi'(\mB)=\Psi'(\mB')$. 
If the internal elements of the subsets differ, then the element $g_i$ in $g$, for $\mathcal{B}$,  differs from  $g_i'$ in $g'$, for $\mathcal{B}'$. 
This also contradicts that $\Psi'(\mathcal{B})=\Psi'(\mathcal{B}')$. 
Therefore, $\Psi'$ is injective.
This shows that $\Psi'$ is a bijection.
\end{proof}

The composition of the bijections from \Cref{lem:ITO <-> BSP} and \Cref{lem:Pi <-> BSP} prove the following:

\begin{proof}[Proof of \Cref{thm:main bijection}]
    The set of outcomes of Lehmer parking functions of length $n$ and the set partitions of $[n]$ are in bijection.
\end{proof}

\section{Lehmer Parking Functions and Catalan Numbers}\label{sec:catalan}

In this section, we discuss two areas related to outcomes of Lehmer parking functions where we encounter the Catalan numbers \cite[\seqnum{A000108}]{OEIS}.

We consider the set of weakly decreasing Lehmer parking functions, which we denote by
\[\IT_n^\downarrow=\{\alpha=(a_1,a_2,\ldots,a_n)\in\IT_n:~a_i\geq a_{i+1}\mbox{ for all }i\in[n-1]\}.\] 
By \cite[Lemma 10]{akc}, the set of permutations  in $\mathfrak{S}_n$ which avoid $132$ is in bijection with the set of weakly decreasing inversion tables of length $n$. 
Recall that 
given $\sigma \in \mathfrak{S}_n$ and $\rho \in S_m$ we say that $\sigma$ avoids $\rho$ as a pattern if there does not exist $0 \leq i_0 < i_1 < \cdots <i_m \leq (n-1)$ such that $\sigma_{i_a} \leq \sigma_{i_b}$ if and only if $\rho_a \leq \rho_b$. 
Let $\Av_n(132)$ be the set of permutations in $\mathfrak{S}_n$ that avoid $132$.
It is known that there are Catalan many $132$-avoiding permutations in $\mathfrak{S}_n$, see \cite{stanley_2015}. 
Thus $|\IT_n^\downarrow|=C_n$.

Now let $\PF_n^\downarrow$ be the set of weakly decreasing parking functions of length $n$. 
It is known that the set $|\PF_n^\downarrow|=C_n$, see \cite{stanley_2015}.
Of course, $\IT_n^\downarrow\subseteq \PF_n^\downarrow$ since Lehmer parking functions are parking functions. Moreover, as the sets have the same cardinality, we have that $\IT_n^\downarrow=\PF_n^\downarrow$.

In the following two technical results we show that the outcomes of weakly decreasing inversion tables avoid the pattern $132$ and that the outcome map from $\IT_n^\downarrow$ to $\Av_n(132)$ is an injective map.

\begin{proposition}\label{thm:cat}
    Let $n\geq 1$. Then $\Out(\IT_n^\downarrow)\subseteq\Av_n(132)$.
\end{proposition}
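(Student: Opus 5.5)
Let $\alpha=(a_1,\ldots,a_n)\in\IT_n^\downarrow$ and $\pi=\Out(\alpha)$. Suppose, for contradiction, that $\pi$ contains $132$: there are spots $p<q<r$ with $\pi_p<\pi_r<\pi_q$. Write $x=\pi_p$, $y=\pi_r$, $z=\pi_q$ for the car labels, so $x<y<z$. Car $x$ parks in spot $p$, car $y$ in spot $r$, car $z$ in spot $q$, and the cars arrive in the order $x$, then $y$, then $z$.

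The weakly decreasing condition gives $a_z\le a_y$. I will combine this with how car $y$ parks.

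1. When car $y$ arrives, spot $q$ is still empty, because car $z>y$ has not yet arrived.
2. Car $y$ parks at spot $r>q$, so it must have passed over spot $q$ or started beyond it. Since $q$ is empty, it cannot have passed over it. Hence $a_y>q$.
3. On the other hand, car $z$ parks in spot $q$, so $a_z\le q$.
4. Therefore $a_z\le q<a_y$, which is consistent with $a_z\le a_y$ and gives no contradiction yet.

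So the monotonicity alone does not settle the matter, and car $x$ must be used. Since $x<y$, we have $a_x\ge a_y>q>p$. Car $x$ therefore starts its search at or beyond $a_x>p$, and a car never moves backward, so it cannot end up in spot $p$. This is the contradiction.

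The argument is short. The key point is to pair $x$ with $y$ through $a_x\ge a_y$, rather than to compare $y$ with $z$. The pattern $132$ is exactly what forces $a_y>q$: an empty spot at $q$ lies between $y$'s preference region and $r$. Combined with $a_x\ge a_y$, this yields $a_x>p$.

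Note that this argument uses only that $\alpha$ is a weakly decreasing parking function, not the Lehmer bound $a_i\le n-i+1$. That is consistent with the earlier observation $\IT_n^\downarrow=\PF_n^\downarrow$.

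The only step requiring care is step 2, the claim that a car cannot pass an empty spot. This holds directly by the parking rule, since each car takes the first empty spot at or after its preference. I expect no real obstacle beyond correctly identifying which pair of cars to compare.
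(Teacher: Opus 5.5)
Your proof is correct and follows the same idea as the paper. In both arguments, the spot $q$ that is later taken by the largest car $z$ is still empty when the middle car $y$ arrives, which forces $a_y>q$, and this contradicts the weak decrease of the preferences. Your version is more precise than the paper's sketch: you explicitly compare with car $x$ via $a_x\ge a_y>q>p$, whereas the paper states that comparison loosely (as one with ``car $\pi_{k-1}$'') and with the pattern's positions mis-indexed.
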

\begin{proof}
    Suppose for contradiction that $\pi$ is the outcome of a weakly decreasing Lehmer parking function and $\pi=\pi_1\pi_2\cdots\pi_n$ contains the pattern $132$. 
    This means, that there exist $\pi_i<\pi_j>\pi_k$ with $\pi_i<\pi_k$ and $i<k<j$.
This implies that after cars $\pi_i$ and $\pi_k$ park, in order to contain the $132$ pattern, there is a gap on the street between spots $i$ and $k$. 
However, the only way for such a gap to exist is if the preference for car $\pi_k$ is strictly larger than the preference for car $\pi_{k-1}$. However, this contradicts the fact that the preferences are weakly decreasing, giving a contradiction. Thus the outcome of a weakly increasing parking function avoids the pattern $132$. 
\end{proof}

\begin{lemma}\label{lem:injective}
The outcome map $\Out: \IT_n^\downarrow\to \Av_n(132)$, defined by parking the cars using a weakly decreasing Lehmer parking function, is an injective map. 
\end{lemma}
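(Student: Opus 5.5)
Given two distinct weakly decreasing Lehmer parking functions $\alpha\neq\beta$ in $\IT_n^\downarrow$, I would show $\Out(\alpha)\neq\Out(\beta)$ by reconstructing $\alpha$ from its outcome. The approach is to exploit the weakly decreasing structure: when preferences are weakly decreasing, the occupied spots at each stage form a tightly controlled configuration, so each car's preference is forced by where it ends up. Concretely, I would prove that for $\alpha\in\IT_n^\downarrow$ with outcome $\pi$, the preference $a_k$ is recovered as follows. Let $p_k=\pi^{-1}_k$ be the spot of car $k$. Then $a_k$ is the smallest spot $s\le p_k$ such that every spot in $[s,p_k-1]$ is occupied by a car with label less than $k$. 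In other words, $a_k$ is the left end of the maximal run of earlier cars immediately to the left of $p_k$, together with $p_k$ itself.

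The key step is to show that car $k$ never skips over a block whose left end lies strictly to the right of its preference. Equivalently, $a_k$ is \emph{not} itself preceded by an occupied spot at time $k$. I would argue by induction on $k$ that after cars $1,\dots,k-1$ park, every maximal occupied block $[u,v]$ has the property that some car in it preferred $u$, and all those preferences are at least $a_{k-1}\ge a_k$.

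\begin{itemize}
\item If $a_k$ lands strictly inside a block $[u,v]$ with $u<a_k$, then some earlier car $c<k$ had preference $u<a_k$.
\item This contradicts $a_c\ge a_k$.
\end{itemize}

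Hence $a_k$ is either empty, so that $p_k=a_k$, or the left endpoint of a block. In the latter case car $k$ drives to the end of that block, and the spot just left of $a_k$ is empty. Either way, $a_k$ equals the left end of the maximal run of smaller labels ending just before $p_k$ (with $a_k=p_k$ if spot $p_k-1$ is empty or holds a larger label). The inductive invariant is maintained, since car $k$ either creates a new block starting at its preference or extends or merges blocks whose left ends keep their witnessing preferences. A merge could also join with a block to the right, but its left end is then no longer a block endpoint, so the invariant still holds.

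With this reconstruction formula, $\alpha$ is a function of $\Out(\alpha)$ alone, so $\Out$ is injective on $\IT_n^\downarrow$. Its image lies in $\Av_n(132)$ by \Cref{thm:cat}. The main obstacle I expect is stating the block invariant cleanly enough to handle merging of adjacent blocks. I would first try phrasing the invariant as follows: every left endpoint of an occupied block at time $k$ equals some earlier preference, and every earlier preference is at least $a_k$. This makes the contradiction in the second bullet above immediate.
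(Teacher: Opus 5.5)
Your proposal is correct, but it takes a different route from the paper. The paper uses a first-difference argument. If $\alpha\neq\beta$ in $\IT_n^\downarrow$ had the same outcome, take the least $i$ with $a_i\neq b_i$, say $a_i<b_i$. Then $a_i<b_i\le b_{i-1}=a_{i-1}$, so $a_i$ is strictly smaller than every earlier preference. Hence under $\alpha$ car $i$ parks exactly at spot $a_i$, since every earlier car sits at or after its own preference. Under $\beta$, car $i$ parks at a spot $\ge b_i>a_i$, a contradiction. You instead build an explicit left inverse: $a_k$ is the left end of the maximal run of labels smaller than $k$ ending at spot $\pi^{-1}_k-1$. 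This formula is right, and it gives more than injectivity. It decodes the preference list from the outcome using only that $\alpha$ itself is weakly decreasing, so it is the explicit inverse of the outcome bijection $\IT_n^\downarrow\to\Av_n(132)$. The paper's route buys brevity. Your block invariant is heavier than necessary. The same one-line fact the paper relies on settles your key claim directly: each car $c<k$ parks at a spot $\ge a_c\ge a_{k-1}\ge a_k$, so every spot occupied at time $k$ is $\ge a_k$, and spot $a_k-1$ is empty when car $k$ arrives. As written, your bullets only exclude $a_k$ lying strictly inside a block. The case where $a_k$ is empty but spot $a_k-1$ is occupied (a block $[u,a_k-1]$ just to its left) needs the same witness argument applied to that block. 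Your final phrasing of the invariant does cover this case, but the direct observation avoids the issue altogether.
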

\begin{proof}
By \Cref{thm:cat}, we know that the outcome of a weakly decreasing Lehmer parking function is a 132 avoiding permutation.
Suppose there exist $\alpha$ and $\beta$ both in $\IT_n^\uparrow$ such that $\Out(\alpha)=\Out(\beta)=\pi$ but $\alpha\neq\beta$.
Let $i\in[n]$ be the smallest index such that $a_i\neq b_i$.
Without loss of generality assume that $a_i<b_i$. 
Since $a_i<b_i$ and both $\alpha$ and $\beta$ are weakly decreasing, $a_i$ is now smaller than all of the preferences $a_1,a_2,\ldots,a_{i-1}$,  so car $i$ parks to the left of cars $1,2,\ldots,i-1$. In addition, since $a_i<b_i$ then car $i$ under $\alpha$ parks to the left of where car $i$ parks under $\beta$.
Then this implies that the outcome for car $i$ is not the same under each of the preferences $\alpha$ and $\beta$, contracting that $\Out(\alpha)=\Out(\beta)$.
\end{proof}

Since both  sets $\IT_n^\downarrow$ and $\Av_n(132)$ have cardinality $C_n$,  \Cref{lem:injective} implies the following.

\begin{theorem}\label{thm:weakly_dec_lehmer}
If $n\geq 1$, then $\Out(\IT_n^\downarrow)=\Av_n(132)$.    
\end{theorem}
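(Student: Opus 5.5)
The plan is a cardinality argument. \Cref{thm:cat} gives $\Out(\IT_n^\downarrow)\subseteq\Av_n(132)$, and \Cref{lem:injective} says the outcome map $\Out:\IT_n^\downarrow\to\Av_n(132)$ is injective. So it suffices to show $|\IT_n^\downarrow|=|\Av_n(132)|=C_n$. An injective map between finite sets of equal size is a bijection, so its image $\Out(\IT_n^\downarrow)$ is all of $\Av_n(132)$.

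For the cardinalities, $|\Av_n(132)|=C_n$ is the classical result cited from \cite{stanley_2015}. For $|\IT_n^\downarrow|$ I would use the observation made before \Cref{thm:cat}: $\IT_n^\downarrow\subseteq\PF_n^\downarrow$ by \Cref{lem:pf}, and $|\PF_n^\downarrow|=C_n$. To avoid circularity, I would check directly that $\PF_n^\downarrow\subseteq\IT_n^\downarrow$. If $\alpha$ is a weakly decreasing parking function, then its increasing rearrangement is its reverse, so $a_{n-i+1}\le i$ for all $i$. Equivalently $a_i\le n-i+1$, which is exactly \Cref{def:Lehmer parking function}. Hence $\IT_n^\downarrow=\PF_n^\downarrow$ and $|\IT_n^\downarrow|=C_n$.

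The main obstacle is making sure the injectivity in \Cref{lem:injective} is solid, since the counting step is routine. If I had to reprove it, I would take $\alpha\ne\beta$ in $\IT_n^\downarrow$ with first difference at index $i$, so $a_i<b_i$, and note that cars $1,\dots,i-1$ occupy the same spots under both. If $a_i$ is an empty spot, car $i$ parks there under $\alpha$; under $\beta$ it parks at some spot $\ge b_i>a_i$, so the outcomes differ. If $a_i$ is occupied, then since $a_i\le a_{i-1}\le\cdots$, the occupied spots at or after $a_i$ form a contiguous run starting from the smallest occupied spot $\ge a_{i-1}$. I would argue that car $i$ under $\alpha$ lands immediately after that run, while under $\beta$ it lands at $\ge b_i$. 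If these two spots could coincide, one needs to track the cars' order more carefully. I would compare the first later car whose preference lands in a region where $\alpha$ and $\beta$ differ, and reduce to the first case.
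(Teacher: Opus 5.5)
Your proposal is correct and follows the paper's argument: containment from \Cref{thm:cat}, injectivity from \Cref{lem:injective}, and $|\IT_n^\downarrow|=|\Av_n(132)|=C_n$; your direct check that a weakly decreasing parking function has $a_i\le n-i+1$ gives $\IT_n^\downarrow=\PF_n^\downarrow$ in a cleaner, self-contained way than the paper's appeal to \cite{akc}. In your injectivity aside, the ``$a_i$ occupied'' case cannot occur: for $i\ge 2$ you have $a_i<b_i\le b_{i-1}=a_{i-1}$, and every earlier car parked at a spot $\ge a_{i-1}$, so spot $a_i$ is empty and your first case already settles the lemma.
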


\bibliographystyle{plain}
\bibliography{bibliography}

\end{document}